\numberwithin{equation}{section}
\numberwithin{figure}{section}
\theoremstyle{plain}
\newtheorem{thm}{\protect\theoremname}[section]
\theoremstyle{plain}
\newtheorem{cor}[thm]{\protect\corollaryname}
\theoremstyle{plain}
\newtheorem{lem}[thm]{\protect\lemmaname}
\theoremstyle{plain}
\newtheorem{prop}[thm]{\protect\propositionname}
\theoremstyle{remark}
\newtheorem{claim}[thm]{\protect\claimname}
\providecommand{\claimname}{Claim}
\providecommand{\corollaryname}{Corollary}
\providecommand{\lemmaname}{Lemma}
\providecommand{\propositionname}{Proposition}
\providecommand{\theoremname}{Theorem}
\begin{document}
\title{On the difference of mean subtree orders under edge contraction}
\author{Ruoyu Wang}
\address{Department of Mathematics, Uppsala University}
\begin{abstract}
Given a tree $T$ of order $n,$ one can contract any edge and obtain
a new tree $T^{*}$ of order $n-1.$ In 1983, Jamison made a conjecture
that the mean subtree order, i.e., the average order of all subtrees,
decreases at least $\frac{1}{3}$ in contracting an edge of a tree.
In 2023, Luo, Xu, Wagner and Wang proved the case when the edge to
be contracted is a pendant edge. In this article, we prove that the
conjecture is true in general. 
\end{abstract}

\maketitle

\section{Introduction and Main Result}

Let $T$ be a tree of order $n$ and $v$ be a vertex in $T.$ The
mean subtree order of $T,$ also known as the global mean of $T,$
denoted by $\mu_{T},$ is defined as the average order of all subtrees
of $T.$ Similarly, the local mean of $T$ at $v,$ denoted by $\mu_{T}(v),$
is defined as the average order of all subtrees of $T$ that contain
$v.$ 

Jamison initiated the study of the global and local mean of a tree
in the 1980s \cite{MR735190,MR762896}. These two papers not only
laid groundwork for research into this invariant, but also made in
total seven conjectures, which received attention and progress of
different degrees over the last two decades. Before this paper, five
of the seven conjectures were settled \cite{MR4282633,MR3213626,MR3982896,MR2595700,MR3433637},
and the two that remained open are
\begin{enumerate}
\item Caterpillar conjecture: Is the tree of maximum density of each order
a caterpillar?
\item Contraction conjecture: The global mean of a tree decreases by at
least $\frac{1}{3}$ in contracting an edge.
\end{enumerate}
This paper focuses on the contraction conjecture, which is put forward
in \cite{MR762896}. In the same paper, Jamison showed that in general
the global mean respects contraction monotonicity, namely, the global
mean decreases under edge contraction. It is then of interest to establish
a bound for this difference. In the case of a path, the global mean
decreases exactly by $\frac{1}{3}.$ Indeed, for $T=P_{n},$ $\mu_{T}=\frac{n+2}{3},$
and $T^{*}=P_{n-1},$ where $T^{*}$ is obtained from $T$ by contracting
any edge. This observation justifies the constant $\frac{1}{3}$ in
the contraction conjecture since the minimum global mean is always
attained by the path. Concerning an upper bound for the contraction
difference, Jamison showed in the same paper \cite{MR762896} that
contracting a pendant edge attached at the center of a path can lead
to a difference of up to roughly $\frac{|T|}{18}.$ 

In 2023, Luo, Xu, Wagner and Wang \cite{MR4563206} showed that if
the edge to be contracted lies on a pendant path, i.e., one end of
the edge connects to some leaf through only vertices of degree two,
then the difference under contraction is at least $\frac{1}{3},$
with equality if and only if the tree is a path. In this article,
we will prove that when the edge to be contracted is a general internal
edge, namely, an edge that does not lie on a pendant path, the difference
is strictly greater than $\frac{1}{3}.$ 
\begin{thm}
\label{thm:mainthm}Let $T$ be a tree and $e$ be an edge that does
not lie on a pendant path. Let $T^{*}$ be the tree formed by contracting
$e$ from $T.$ Then
\[
\mu_{T}-\mu_{T^{*}}>\frac{1}{3}.
\]
\end{thm}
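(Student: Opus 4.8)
The plan is to reduce the theorem to a single upper bound on $\mu_T$ by classifying subtrees according to the edge $e=uv$, and then to prove that bound, with the internal-edge hypothesis supplying the strictness. First I would delete $e$, splitting $T$ into the component $T_u\ni u$ and $T_v\ni v$. Every subtree of $T$ lies in exactly one of four classes: those meeting neither of $u,v$; those containing $u$ but not $v$; those containing $v$ but not $u$; and those containing both (equivalently, containing $e$). Writing $a,A$ for the number and the order-sum of subtrees of $T_u$ that contain $u$, and $b,B$ for those avoiding $u$, and symmetrically $c,C,d,D$ on the $v$-side, one obtains $N:=N(T)=a+b+c+d+ac$ and $S:=S(T)=A+B+C+D+cA+aC$, since a subtree containing $e$ is a pair $(S_u,S_v)$ of root-containing pieces of order $|S_u|+|S_v|$. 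Contracting $e$ leaves the neither-class untouched and identifies the two roots, so $N(T^{*})=b+d+ac$ and $S(T^{*})=B+D+cA+aC-ac$; hence the differences collapse to $N-N(T^{*})=a+c$ and $S-S(T^{*})=A+C+ac$. A short manipulation then turns $\mu_T-\mu_{T^{*}}>\tfrac13$ into the equivalent statement
\[
\mu_T<\frac{3(A+C)+2ac-(b+d)}{3(a+c)},
\]
which I will call $(\star)$.

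This reduction is valid for \emph{every} edge, so the entire difficulty is concentrated in proving the bound $(\star)$ and in extracting strictness from the hypothesis that neither $T_u$ nor $T_v$ is a path emanating from its root. My first instinct---to control $\mu_{T^{*}}$ via the expected inequality $\mu_{T^{*}}\le\mu_{T^{*}}(w)=\tfrac{A}{a}+\tfrac{C}{c}-1$---turns out to be too lossy: already for the symmetric double star this over-estimates $\mu_{T^{*}}$, and the resulting sufficient condition fails even though the true inequality holds with room to spare. So the plan is instead to keep $(\star)$ in its exact product form $N\bigl(3A+3C+2ac-b-d\bigr)>3S(a+c)$ and to control the two factors by \emph{sharp} rooted-tree estimates. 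Concretely I would (i) record the per-side relations tying the root-containing data $(a,A)$ to the root-avoiding data $(b,B)$ and to the local mean $A/a$ of the rooted tree $T_u$, and likewise for $T_v$; (ii) substitute these into the product inequality; and (iii) use the pendant-path theorem of Luo--Xu--Wagner--Wang as the boundary case, arguing that replacing a path-side by any bushier rooted tree strictly increases the gap between the two factors of $(\star)$.

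The main obstacle I anticipate is the cross term $cA+aC$ (equivalently $ac(\tfrac{A}{a}+\tfrac{C}{c})$), which couples the two sides of $e$: because of it, $(\star)$ does not decouple into two independent one-sided inequalities, and any term-by-term bounding must respect this coupling. Compounding this, the target constant is exactly $\tfrac13$, so there is no slack at the extremal configuration: on the path, equality holds, hence every inequality used en route must be tight enough to vanish precisely on paths and to be strict off them. I expect to handle this by an extremal/smoothing argument: treat the gap $N(3A+3C+2ac-b-d)-3S(a+c)$ as a functional on the pair of rooted trees $(T_u,T_v)$, show that local modifications making one side more path-like do not increase it, so that the minimum over internal edges is approached by path-like sides, and then invoke the pendant case to conclude that the infimum $\tfrac13$ is not attained once both sides branch---yielding the strict inequality of Theorem~\ref{thm:mainthm}.
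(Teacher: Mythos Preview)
Your reduction to $(\star)$ is correct: the identities $N-N(T^{*})=a+c$ and $S-S(T^{*})=A+C+ac$ are right, and $(\star)$ is equivalent to the theorem. The gap is everything after that. Steps (i)--(iii) are a wish list, not a strategy: you never name the ``sharp rooted-tree estimates'' that would let you compare $N(3A+3C+2ac-b-d)$ against $3S(a+c)$, and the smoothing step is only a hope. You give no candidate local move that ``makes $T_u$ more path-like'' while provably not increasing the gap functional; the natural moves (delete a far leaf, slide a branch toward the root, replace a branch by a path with the same root-count $a$) change all of $a,A,b,B$ at once, and because $S$ contains the cross term $cA+aC$ the effect on $G$ is genuinely two-sided and not obviously monotone. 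Without a monotone move you cannot reduce to the pendant boundary, and the clause ``the infimum $\tfrac13$ is not attained once both sides branch'' is then just a restatement of the theorem rather than a step toward it.

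The paper bypasses smoothing entirely. It first uses the freedom to slide $e$ to one end of its internal path (contracting any edge on that path yields the same $T^{*}$), so that one side $T_1$ has root of degree $\ge 2$ while the other side is a path of $l-1$ vertices glued to a second such tree $T_2$. Writing $\mu$ as an explicit rational function of $l$ and clearing denominators turns $\mu(l)-\mu(l-1)-\tfrac13>0$ into $[\mathrm{I}]\cdot l(l-1)+[\mathrm{II}]\cdot(2l-1)+[\mathrm{III}]>0$, and each coefficient is shown positive using four concrete rooted-tree inequalities: the two new lemmas $3(N_TR_v-N_vR_T)\ge N_T^{2}-N_vN_T$ and $(N_v+1)(N_T-N_v)\ge 3(R_T-R_v)$, valid for every $(T,v)$, together with $N_v^{2}+N_v\ge 3R_v$ and $N_vN_T+N_T>3R_T$, valid whenever $\deg(v)\ge 2$ and $T\notin\{P_{2,2},P_{2,3}\}$. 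These are exactly the ``sharp rooted-tree estimates'' you allude to but never identify; they are tight on paths and strict off them, which is what delivers the strict $\tfrac13$. The two excluded small trees are then dispatched by direct substitution. If you want to rescue your outline, the missing concrete content is precisely this quartet of inequalities---in particular the pair requiring $\deg(v)\ge 2$, which encodes quantitatively that a branching root is ``far enough from a path''.
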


Hence, we settle the contraction conjecture with an affirmative answer.
Note that the case of the difference being equal to $\frac{1}{3}$
is then characterized by contracting any edge in a path.
\begin{cor}
Let $T$ be a tree and $e$ be an edge in $T.$ Let $T^{*}$ be the
tree formed by contracting $e$ from $T.$ Then
\[
\mu_{T}-\mu_{T^{*}}\ge\frac{1}{3},
\]
with equality if and only if $T$ is a path.
\end{cor}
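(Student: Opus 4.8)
The Corollary is immediate once Theorem~\ref{thm:mainthm} is available: if $e$ lies on a pendant path, the result of Luo, Xu, Wagner and Wang \cite{MR4563206} gives $\mu_T-\mu_{T^*}\ge\frac13$ with equality exactly when $T$ is a path, while if $e$ does not lie on a pendant path Theorem~\ref{thm:mainthm} gives the strict inequality, and such a $T$ is never a path. So the real task is Theorem~\ref{thm:mainthm}, whose proof I outline below.

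The plan is to localize everything at the edge. Write $e=uv$ and let $A,B$ be the components of $T-e$ containing $u,v$ respectively. Let $s_u,\sigma_u$ denote the number and total order of the subtrees of $A$ containing $u$, let $s_v,\sigma_v$ be the analogous quantities for $B$ at $v$, and let $S_A,\Sigma_A$ (resp.\ $S_B,\Sigma_B$) be the number and total order of the subtrees of $A$ avoiding $u$ (resp.\ of $B$ avoiding $v$). Classifying a subtree of $T$ by its trace on $\{u,v\}$, and a subtree of $T^*$ by whether it contains the contracted vertex $w$, one obtains
\[
S_T=S_{T^*}+s_u+s_v,\qquad \Sigma_T=\Sigma_{T^*}+\sigma_u+\sigma_v+s_us_v,
\]
where $S_{T^*}=s_us_v+S_A+S_B$ and $\Sigma_{T^*}=\sigma_us_v+s_u\sigma_v-s_us_v+\Sigma_A+\Sigma_B$; the $-s_us_v$ term records that every subtree through $e$ loses one vertex when $u,v$ are identified.

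From these identities, writing $M=\frac{\sigma_u+\sigma_v+s_us_v}{s_u+s_v}$, the mean $\mu_T$ is the weighted mediant of $\mu_{T^*}$ and $M$, so
\[
\mu_T-\mu_{T^*}=\frac{(s_u+s_v)\,(M-\mu_{T^*})}{S_T},
\]
and Theorem~\ref{thm:mainthm} is equivalent to the polynomial inequality $3(s_u+s_v)(M-\mu_{T^*})>S_T$. I would prove this by an extremality argument: regard the left-hand side as a function of the shape of the branch $A$, with $B$ fixed, and show it is minimized when $A$ is a path attached to $u$ at an endpoint. The key step is a straightening lemma, asserting that a local operation making $A$ more path-like strictly decreases $\mu_T-\mu_{T^*}$ unless $A$ is already such a path. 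Applying it to straighten $A$ completely yields a tree $T'$ in which $e$ has become a pendant-path edge and $\mu_T-\mu_{T^*}>\mu_{T'}-\mu_{T'^*}$; the result of \cite{MR4563206} then gives $\mu_{T'}-\mu_{T'^*}\ge\frac13$, and the two inequalities chain to $\mu_T-\mu_{T^*}>\frac13$. Since $e$ is internal, $A$ is genuinely non-path, so at least one straightening step is strict, which is precisely what produces the strict inequality.

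The main obstacle is the straightening lemma, for the quantity $M-\mu_{T^*}$ is not local: the factor $\mu_{T^*}=\Sigma_{T^*}/S_{T^*}$ couples the two branches through the product $s_us_v$ and through $S_A,\Sigma_A$, so an alteration of $A$ shifts numerator and denominator together and in the same direction. A direct estimate is therefore too lossy; indeed the infimum of $\mu_T-\mu_{T^*}$ over internal-edge trees is exactly $\frac13$ (approached by long branches carrying only a little branching), so there is no uniform gap and the argument must be a genuine monotonicity-and-equality analysis rather than a crude bound. Carrying out the lemma amounts to showing that the sign of a certain bilinear form in $(s_u,\sigma_u,S_A,\Sigma_A)$ is governed by known comparisons between the subtree-count and subtree-order data of a branch and those of the path of the same order; proving these comparisons, and checking that equality forces the path, is where the real work lies.
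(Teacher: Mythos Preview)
Your first paragraph is correct and is exactly how the paper derives the Corollary: combine Theorem~\ref{thm:mainthm} (strict inequality for edges not on a pendant path) with the pendant-path case of \cite{MR4563206}, noting that a tree with an edge off every pendant path cannot be a path.

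You then go further and outline a proof of Theorem~\ref{thm:mainthm} itself, and here your route is genuinely different from the paper's and, as written, has a real gap. The paper does not argue by extremality or straightening. It parametrises by the length $l$ of the internal path carrying $e$, writes $\mu(l)$ explicitly as a rational function of the eight subtree statistics $(N_1,R_1,N_L,R_L,N_2,R_2,N_R,R_R)$ of the two end-branches, and reduces $\mu(l)-\mu(l-1)>\tfrac13$ to the positivity of three concrete polynomial expressions $[\mathrm{I}],[\mathrm{II}],[\mathrm{III}]$. Each of these is then bounded below using the branch inequalities (\ref{eq:N^2+N-3R})--(\ref{eq:3(R-R)}), established in Section~4 by separate inductions; the two exceptional branches $P_{2,2}$ and $P_{2,3}$, where (\ref{eq:N^2+N-3R}) and (\ref{eq:NN+N-3R}) fail, are handled by direct substitution.

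Your straightening approach is at this stage only a programme. The enumerative identities and the mediant formula you write down are correct, but the key monotonicity lemma is asserted, not proved, and you yourself flag the obstruction: the coupling through $\mu_{T^*}$ makes a local move shift numerator and denominator in the same direction, so crude estimates are too lossy and one must track equality cases exactly. You also leave unspecified what ``straightening'' preserves (order of $A$? the value of $s_u$?), without which the comparison to the path is not even well-posed. Until the straightening lemma is actually proved, the outline does not establish Theorem~\ref{thm:mainthm}, and so does not independently prove the Corollary.
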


Two inequalities that we establish to facilitate the proof are also
interesting in their own right.

It is known that the local mean at some vertex is no less than the
global mean (\cite{MR735190}, Theorem 3.9). The first inequality
provides a lower bound on the difference between local mean and global
mean.

Let $N_{T}$ be the number of subtrees in $T$ and $R_{T}$ the total
cardinality of these subtrees. Similarly, $N_{v}$ is the number of
subtrees in $T$ that contains $v$ and $R_{v}$ is the total cardinality
of these subtrees.
\begin{lem}
Let $T$ be a tree and $v$ a vertex. Then $T$ satisfies
\[
3(N_{T}R_{v}-N_{v}R_{T})\ge N_{T}^{2}-N_{v}N_{T},
\]
which is equivalent to
\[
\mu_{T}(v)-\mu_{T}\ge\frac{1}{3}\left(\frac{N_{T}}{N_{v}}-1\right).
\]
\end{lem}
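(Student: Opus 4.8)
The plan is to prove the equivalent polynomial inequality $3(N_T R_v - N_v R_T)\ge N_T^2 - N_v N_T$ by induction on $|T|$, assembling the tree one branch at a time at $v$. First I would separate the subtrees of $T$ into those containing $v$ and those avoiding it. Writing $\bar N_v = N_T - N_v$ and $\bar R_v = R_T - R_v$ for the number and total order of the subtrees avoiding $v$, the left-hand side becomes $3(\bar N_v R_v - N_v\bar R_v)$ and the right-hand side becomes $N_T\bar N_v$. It is worth noting that this target is precisely the estimate $\mathrm{Cov}(X,Y)\ge (1-p)/3$, where $X$ is the order of a uniformly random subtree $S$ of $T$, $Y$ is the indicator of the event $v\in S$, and $p = N_v/N_T$; this reformulation both explains the constant $\tfrac13$ and predicts that equality should occur only in the most path-like situations.

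Next I would record how the four quantities behave when a branch is attached at $v$. Let $v_1$ be a neighbor of $v$, let $T_1$ be the branch hanging from $v_1$, and let $T'$ be $T$ with this branch removed. Setting $a = N_{v_1}$ and $b = R_{v_1}$ (subtrees of $T_1$ containing $v_1$ and their total order), $n_1 = N_{T_1}$, and $r_1 = R_{T_1}$, a short count gives
\[
N_v = N_v'(1+a),\qquad R_v = (1+a)R_v' + b\,N_v',
\]
\[
N_T = N_T' + a\,N_v' + n_1,\qquad R_T = R_T' + a\,R_v' + b\,N_v' + r_1 .
\]

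By strong induction on $|T|$ I may assume the inequality both for $T'$ rooted at $v$ and for the branch $T_1$ rooted at $v_1$; the latter reads $3(n_1 b - a r_1)\ge n_1^2 - a n_1$. Substituting the update rules turns the desired inequality for $T$ into a polynomial inequality in the eight base/branch quantities, and I expect the residual, after feeding in the two inductive inequalities, to be controlled by the purely branch-level bound
\[
N_{T_1}(N_{v_1}+2)\ge 3R_{T_1},
\]
equivalently $N_{v_1}\ge 3\mu_{T_1}-2$. This is the second inequality I would establish in its own right, by a separate induction, with equality exactly when $T_1$ is a path rooted at an endpoint; it is the engine that supplies the missing slack at each attachment step. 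The base case $T=\{v\}$ gives $0\ge 0$.

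The hard part will be the bookkeeping in the reduction step: attaching a branch introduces quadratic cross-terms coupling the branch statistics to those of $T'$ (products such as $R_v' N_{T_1}$ and $N_v' R_{T_1}$), and showing that the full expression is nonnegative using only the two available inequalities, without discarding so much that the bound is lost, is delicate. I would guide the algebra by tracking the equality case throughout: every inequality used must be tight simultaneously precisely when $T$ is a path with $v$ at an end, which both pins down where slack may be spent and serves as a continual check on the computation.
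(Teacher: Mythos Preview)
Your plan is essentially the paper's own argument: the paper proves the inequality by induction on $|T|$, splitting the inductive step into two claims --- adding a pendant vertex at the root (which consumes exactly your auxiliary bound $N_{T_1}(N_{v_1}+2)\ge 3R_{T_1}$, the paper's (\ref{eq:NN+2N>3R})) and then merging two rooted trees at $v$ --- whereas you fold these into a single ``attach one branch at $v$'' step with the same update rules. The one ingredient you do not list but will almost certainly need when you carry out the bookkeeping is the easy estimate $R_{v}\ge N_{T}$ (the paper's (\ref{eq:R>N})), which the paper uses to finish the merge step; with that in hand your outline goes through exactly as in the paper.
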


The second inequality establishes an upper bound for the global mean
of $T-v,$ i.e., the tree with vertex $v$ removed. Note that here
$T-v$ can be a forest, i.e., a union of several disjoint trees.
\begin{lem}
\label{lem: mu_T-v}Let $T$ be a tree and $v$ a vertex. Then $T$
satisfies
\[
N_{v}N_{T}-N_{v}^{2}+N_{T}-N_{v}\ge3(R_{T}-R_{v}),
\]
which is equivalent to
\[
\mu_{T-v}\coloneqq\frac{R_{T}-R_{v}}{N_{T}-N_{v}}\le\frac{N_{v}+1}{3}.
\]
\end{lem}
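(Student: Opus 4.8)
The plan is to derive the claimed bound from the previous lemma combined with one auxiliary counting inequality. Write $A=N_{T}-N_{v}$ and $B=R_{T}-R_{v}$ for the number and the total order of the subtrees that avoid $v$; since every one-vertex subtree away from $v$ avoids $v$, we have $A>0$ as soon as $T$ has more than one vertex, so the two displayed inequalities are genuinely equivalent and the goal is exactly $3B\le(N_{v}+1)A$. Multiplying this by $N_{v}$ and feeding in the first inequality of the previous lemma, $3(N_{T}R_{v}-N_{v}R_{T})\ge N_{T}^{2}-N_{v}N_{T}$, the terms collapse to $3N_{v}(R_{T}-R_{v})\le(N_{T}-N_{v})(3R_{v}-N_{T})$, so after dividing by $A>0$ it suffices to establish the single auxiliary inequality
\[
3R_{v}\le N_{v}^{2}+N_{v}+N_{T};
\]
call this $(\ast)$. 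This is the real content of the lemma, and it holds with equality for a path rooted at an endpoint, the extremal configuration.

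The next step is a counting bound that I would prove first and in isolation: for every tree and every vertex, $N_{T}\le N_{v}^{2}$, i.e.\ the total number of subtrees never exceeds the square of the number passing through a fixed vertex. This follows by induction through the branch decomposition at $v$. If $T_{1},\dots,T_{d}$ are the components of $T-v$ with roots $u_{i}$, and $p_{i}$ denotes the number of subtrees of $T_{i}$ through $u_{i}$, then $N_{v}=\prod_{i}(1+p_{i})\ge 1+\sum_{i}p_{i}$, while $N_{T}=N_{v}+\sum_{i}N_{T_{i}}$. Bounding $N_{T_{i}}\le p_{i}^{2}$ by the inductive hypothesis and using $\sum_{i}p_{i}^{2}\le(\sum_{i}p_{i})^{2}\le(N_{v}-1)^{2}\le N_{v}(N_{v}-1)$ gives $N_{T}\le N_{v}^{2}$.

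With this in hand I would prove $(\ast)$ by induction on $|T|$, peeling off a single branch $T_{d}$ at $v$ and setting $T^{-}=T-T_{d}$. The attachment identities $N_{v}(T)=N_{v}(T^{-})(1+p)$, $R_{v}(T)=(1+p)R_{v}(T^{-})+N_{v}(T^{-})q$, and $N_{T}=N_{T^{-}}+\nu+N_{v}(T^{-})p$ — where $p,q,\nu$ are the number of subtrees of $T_{d}$ through its root, their total order, and the total number of subtrees of $T_{d}$ — let me substitute the inductive hypothesis $(\ast)$ for both $T^{-}$ and $T_{d}$, together with the auxiliary bound applied to $T^{-}$ (giving $M\le P^{2}$) and to $T_{d}$ (giving $\nu\le p^{2}$), where $P=N_{v}(T^{-})$ and $M=N_{T^{-}}$. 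After cancellation the inductive step reduces to the nonnegativity of
\[
D=p^{2}P(P-1)+p\bigl(P^{2}-M\bigr)+\nu(1-P),
\]
and indeed $D\ge p(P^{2}-M)+(P-1)(p^{2}P-\nu)\ge 0$, since $P^{2}\ge M$ and $\nu\le p^{2}\le p^{2}P$.

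The conceptual obstacle is locating the auxiliary inequality $N_{T}\le N_{v}^{2}$: without it the term $\nu(1-P)$ in $D$ is negative and the naive induction stalls, and it is precisely the quadratic slack of $N_{v}^{2}$ (rather than a merely linear bound) that absorbs it. Everything else is the bookkeeping in the three attachment identities and the cancellation producing $D$, which is routine once the identities are in place. I note finally, as a consistency check, that an alternative route leads to the same crux: because $\mu_{T-v}$ is a weighted average of the $\mu_{T_{i}}$ it is at most $\max_{i}\mu_{T_{i}}$, which reduces the whole statement to the case where $v$ is a leaf, confirming that the single-branch situation — with the path extremal — is where the inequality is tight.
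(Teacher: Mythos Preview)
Your argument is correct and takes a genuinely different route from the paper. You reduce the statement to the combination of the previous lemma (the paper's inequality~(2.7), $3(N_TR_v-N_vR_T)\ge N_T^2-N_vN_T$) and the auxiliary bound $(\ast)\colon N_v^{2}+N_v+N_T\ge 3R_v$, via the clean step $3N_v(R_T-R_v)\le(N_T-N_v)(3R_v-N_T)$; a minor slip is that the final division is by $N_v$, not by $A$. In fact $(\ast)$ is exactly the paper's inequality~(2.2), quoted there from Luo--Xu--Wagner--Wang, so within the paper's framework your reduction already makes the present lemma an immediate corollary of (2.7) and (2.2), and your separate inductive proof of $(\ast)$ via the bound $N_T\le N_v^{2}$, while correct, duplicates known work. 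Your closing ``consistency check'' is an even shorter alternative: reducing to the leaf case amounts, after relabelling, to the bound $\mu_S\le(N_u(S)+2)/3$ for any tree $S$ and vertex $u$, which is precisely the paper's cited inequality~(2.4). By contrast, the paper proves the lemma by a direct structural induction (Claims~4.3--4.5 and Lemma~4.6): first the case $\deg(v)=1$ by stripping the pendant path down to the nearest branching vertex, then closure under the root-merging union $T_1\overset{v}{\cup}T_2$. The paper's route is independent of (2.7); yours is shorter but relies on (2.7) having been established first.
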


\medskip{}

\section{Auxiliary Inequalities}

We first state without proof four inequalities from earlier papers.
\begin{align}
N_{v}^{2}+N_{v} & \ge2R_{v},\label{eq:JamisonIneq}\\
N_{v}^{2}+N_{v}+N_{T} & \ge3R_{v}.\label{eq:StephanIneq}\\
R_{v} & \ge N_{T},\label{eq:R>N}\\
N_{v}N_{T}+2N_{T} & \ge3R_{T}.\label{eq:NN+2N>3R}
\end{align}

The first goes back to Jamison's \cite{MR735190}, Lemma 3.2(c). The
second is proved in \cite{MR4563206}, Lemma 3.1. Inequality (\ref{eq:StephanIneq})
can be seen as a refinement of (\ref{eq:JamisonIneq}), in the sense
that $R_{v}$ is in general greater than or equal to $N_{T}$ (by
(\ref{eq:R>N})) which makes (\ref{eq:StephanIneq}) tighter than
(\ref{eq:JamisonIneq}). Note that equality holds in both if and only
if $T$ is a path and $v$ is a leaf. The third and fourth inequalities
are also proved in \cite{MR4563206}, Lemma 2.2 and Corollary 4.3.
Note also that if one attaches a pendant vertex $v_{0}$ to any vertex
$v\in T$ and applies Lemma \ref{lem: mu_T-v} to $v_{0}$ in $T+v_{0},$
then one has $\mu_{T}\le\frac{N_{v}+2}{3}$ which is equivalent to
(\ref{eq:NN+2N>3R}). 

Apart from the inequalities above, we shall also establish several
inequalities for rooted trees to facilitate the proof. Two rooted
trees will be dealt with separately in some cases, one is $S_{3}$
with root at the center and the other is $P_{4}$ rooted at an internal
vertex. For the sake of simplicity, we call these two rooted trees
$P_{2,2}$ and $P_{2,3}.$ Indeed, an alternative way to construct
$P_{2,2}$ is to identify two end vertices of two instances of $P_{2},$
and let the identified vertex be the root. For $P_{2,3},$ one can
identify the end vertex of $P_{2}$ with that of $P_{3}$ and again
let the identified vertex be the root. 

\begin{figure}[h]
\captionsetup{justification=centering}
\begin{minipage}[b]{.45\linewidth} 
\begin{center}     
\begin{tikzpicture}     
	{          
	
	\draw[thick] (0.5,0.25)--(-0.5,0)--(0.5,-0.25);

	\draw[fill] (-0.5,0) circle (0.08);     
	\draw[fill] (0.5,0.25) circle (0.08);
	\draw[fill] (0.5,-0.25) circle (0.08);
	
	\node at (-0.8,0) {$v$};     

   	} 	
\end{tikzpicture}\\ 
\subcaption{$P_{2,2}$} 
\end{center} 
\end{minipage}\quad\begin{minipage}[b]{.45\linewidth}
\begin{center}     
\begin{tikzpicture}
{ 	
	\draw[thick] (0.9,0.25)--(0,0)--(0.7,-0.15);
	\draw[thick] (0.7,-0.15)--(1.4,-0.3);
   
	\draw[fill] (0,0) circle (0.08);     
	\draw[fill] (0.9,0.25) circle (0.08);
	\draw[fill] (0.7,-0.15) circle (0.08);
	\draw[fill] (1.4,-0.3) circle (0.08);
	
	\node at (-0.3,0) {$v$};
	
	} 	
\end{tikzpicture}\\ 
\subcaption{$P_{2,3}$} 
\end{center} 
\end{minipage} 

\caption{$P_{2,2}$ and $P_{2,3}$.}
\label{fig:P22P23} 
\end{figure}
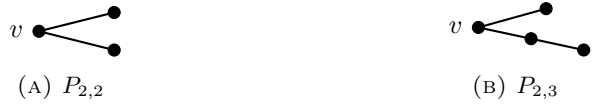 

Let $T_{v}$ be a rooted tree with $\deg(v)\ge2.$ Let $T_{v}\neq P_{2,2}$
or $P_{2,3}.$ Then the following two inequalities are true. 
\begin{align}
N_{v}^{2}+N_{v} & \ge3R_{v},\label{eq:N^2+N-3R}\\
N_{v}N_{T}+N_{T} & >3R_{T}.\label{eq:NN+N-3R}
\end{align}
The following two inequalities, which have been introduced before,
hold for any tree $T$ and any $v\in T.$ 
\begin{align}
3(N_{T}R_{v}-N_{v}R_{T}) & \ge N_{T}^{2}-N_{v}N_{T},\label{eq:3(NR-RN)}\\
N_{v}N_{T}-N_{v}^{2}+N_{T}-N_{v} & \ge3(R_{T}-R_{v}).\label{eq:3(R-R)}
\end{align}
Observe that (\ref{eq:StephanIneq}) and (\ref{eq:3(R-R)}) together
imply (\ref{eq:NN+2N>3R}) by taking the sum.

The inequalities above will be proved in Lemma \ref{lem:3(NR-RN)}
and \ref{lem:3(R-R)}--\ref{lem:NN+N>3R}.

\section{Proof of The Main Result}

A general edge that does not lie on a pendant path must lie in some
internal path whose both ends have degree greater than 2 and all vertices
in between have degree 2. Then contracting such an edge has the same
result as contracting any edge in that internal path. Thus, we consider
the case of contracting the edge that lies in one end of the path.
Let $v_{1}$ and $v_{2}$ be the two ends of such a path, and $v_{1}w_{1}$
be the edge to be contracted, as shown in Figure~\ref{fig:mainresult}.
Note that $v_{1}$ and $v_{2}$ both have degree no less than 3. Assume
that the path between $v_{1}$ and $v_{2}$ contains $l+1$ vertices,
$l\ge1.$

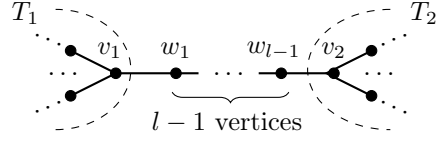
\begin{figure}[h]     
\begin{center}     
\begin{tikzpicture}     
	{          
	\draw[thick] (0,0)--(0.3,0);
	\draw[thick] (0,0)--(-0.8,0);
	\node at (0.7,0) {$\ldots$};  
	\draw[fill] (0,0) circle (0.07);     
	\draw[fill] (2.1,0) circle (0.07);
	\draw[thick] (2.6,-0.3)--(2,0)--(2.6,0.3);
	\draw[thick] (-1.4,-0.3)--(-0.8,0)--(-1.4,0.3);
	\draw[fill] (2.6,0.3) circle (0.07);
	\draw[fill] (2.6,-0.3) circle (0.07);
	\draw[fill] (-1.4,-0.3) circle (0.07);
	\draw[fill] (-1.4,0.3) circle (0.07);
	\draw[fill] (-0.8,0) circle (0.07);
	\draw[fill] (1.4,0) circle (0.07);
	
	\draw[thick] (1.1,0)--(2,0); 
	
	\draw[dashed] (2.85,0.85) arc (90:270:1.1cm and 0.8cm);
	\draw[dashed] (-1.6,-0.75) arc (-90:90:1cm and 0.8cm);

	\draw [decorate,decoration={brace,amplitude=4pt},xshift=0pt,yshift=0pt] (1.5,-0.25) -- (-0.05,-0.25) node [black,midway,xshift=-0.05cm, yshift=-0.4cm]{$l-1$ vertices};
	
	\node at (-0.88,0.3) {$v_1$};
	\node at (3.3,0.8) {$T_2$};
	\node at (-2,0.8) {$T_1$};
    \node at (0,0.3) {$w_1$};
	\node at (1.3,0.3) {$w_{l-1}$};
	\node at (2.1,0.3) {$v_2$};
	\node at (2.7,0) {$\ldots$};
	\node at (-1.5,0) {$\ldots$}; 

	\node[thick,rotate=23,anchor=center] at (2.9,0.45) {$\ldots$}; 
	\node[thick,rotate=-23,anchor=center] at (2.9,-0.45) {$\ldots$};

	\node[thick,rotate=-23,anchor=center] at (-1.7,0.45) {$\ldots$}; 
	\node[thick,rotate=23,anchor=center] at (-1.7,-0.45) {$\ldots$};
	     } 	
\end{tikzpicture}\\ 
\end{center} 
\caption{Contraction of the edge $v_1w_1$}
\label{fig:mainresult} 
\end{figure} 

Let $T_{1}$ ($T_{2}$) be the component containing $v_{1}$ ($v_{2}$)
in $T-v_{1}w_{1}$ ($T-w_{l-1}v_{2}$). Let $N_{1},R_{1},N_{L}$ and
$R_{L}$ be the number of subtrees in $T_{1}$ that contain $v_{1},$
the sum of the cardinalities of all such trees, the number of subtrees
in $T_{1}$ and their sum of the cardinalities. In the same way, we
define $N_{2},R_{2},N_{R}$ and $R_{R}.$ In general, for other cases,
we use $N$ for the number of subtrees involved, and $R$ for the
total cardinality. Then the mean subtree order can be expressed as
a function of $l,$ $\mu(l).$ 
\begin{align*}
\mu(l)= & \:\frac{R(\textrm{subtrees of }T_{2}+w_{1}\ldots w_{l-1})+R_{L}+R(\textrm{subtrees of \ensuremath{T} containing \ensuremath{v_{1}w_{1}}})}{N(\textrm{subtrees of }T_{2}+w_{1}\ldots w_{l-1})+N_{L}+N(\textrm{subtrees of \ensuremath{T} containing \ensuremath{v_{1}w_{1}}})},
\end{align*}
where $w_{1}\ldots w_{l-1}$ is the path from $w_{1}$ to $w_{l-1},$
consisting of $l-1$ vertices.

$N(\textrm{subtrees of }T_{2}+w_{1}\ldots w_{l-1})$ can be calculated
as follows,
\begin{align*}
 & \sum_{i=1}^{l-1}N(\textrm{subtrees of \ensuremath{T_{2}+w_{i}\ldots w_{l-1}} that contain \ensuremath{w_{i}}})+N_{R}=\sum_{i=1}^{l-1}(N_{2}+i)+N_{R}.
\end{align*}
Correspondingly, their total cardinality, $R(\textrm{subtrees of }T_{2}+w_{1}\ldots w_{l-1}),$
can be expressed as
\begin{align*}
 & \sum_{i=1}^{l-1}R(\textrm{subtrees of \ensuremath{T_{2}+w_{i}\ldots w_{l-1}} that contain \ensuremath{w_{i}}})+R_{R}\\
 & =\sum_{i=1}^{l-1}\left(R_{2}+iN_{2}+\frac{i(i+1)}{2}\right)+R_{R}.
\end{align*}
As for $N(\textrm{subtrees of \ensuremath{T} containing \ensuremath{v_{1}w_{1}}}),$
it simply equals the product
\begin{align*}
 & N_{1}\cdot N(\textrm{subtrees of }T_{2}+w_{1}\ldots w_{l-1}\textrm{ that contain }w_{1})=N_{1}(N_{2}+l-1).
\end{align*}
For their total cardinality, note that each subtree of $T_{1}$ that
contains $v_{1}$ is counted exactly $N(\textrm{subtrees of }T_{2}+w_{1}\ldots w_{l-1}\textrm{ that contain \ensuremath{w_{1}}})$
times in the total cardinality $R(\textrm{subtrees of \ensuremath{T} containing \ensuremath{v_{1}w_{1}}}),$
and similarly, each subtree of $T_{2}+w_{1}\ldots w_{l-1}$ that contains
$w_{1}$ is counted exactly $N_{1}$ times. Therefore, it can be calculated
as follows,
\begin{align*}
 & N(\textrm{subtrees of }T_{2}+w_{1}\ldots w_{l-1}\textrm{ that contain \ensuremath{w_{1}}})\cdot R_{1}\\
 & +N_{1}\cdot\left(R_{2}+(l-1)N_{2}+\frac{l(l-1)}{2}\right)\\
= & \:(N_{2}+l-1)R_{1}+N_{1}\left(R_{2}+(l-1)N_{2}+\frac{l(l-1)}{2}\right).
\end{align*}
Putting all parts together, we have
\begin{align*}
\mu(l)= & \:\frac{\frac{1}{6}l^{3}+(\frac{N_{1}}{2}+\frac{N_{2}}{2})l^{2}+Al+B}{\frac{1}{2}l^{2}+(N_{1}+N_{2}-\frac{1}{2})l+C},
\end{align*}
where
\begin{align*}
A= & \:R_{1}+R_{2}+N_{1}N_{2}-\frac{1}{2}N_{1}-\frac{1}{2}N_{2}-\frac{1}{6},\\
B= & \:N_{1}R_{2}+N_{2}R_{1}-N_{1}N_{2}+R_{L}+R_{R}-R_{1}-R_{2},\\
C= & \:N_{1}N_{2}+N_{L}+N_{R}-N_{1}-N_{2}.
\end{align*}
The difference of mean subtree orders under the contraction of $v_{1}w_{1}$
is simply
\begin{align*}
d(l)= & \:\mu(l)-\mu(l-1).
\end{align*}
Our goal is to show that $d(l)-\frac{1}{3}>0.$ This inequality, after
simplification, is equivalent to
\begin{align*}
[\textrm{I}]\cdot l(l-1)+[\textrm{II}]\cdot(2l-1)+[\textrm{III}] & >0,
\end{align*}
where
\begin{align*}
[\textrm{I}] & =\frac{C}{2}-\frac{3A}{2}+\frac{1}{2}(N_{1}+N_{2}+1)\left(N_{1}+N_{2}-\frac{1}{2}\right),\\{}
[\textrm{II}] & =\frac{C}{2}(N_{1}+N_{2}+1)-\frac{3B}{2},\\{}
[\textrm{III}] & =3AC-3B\left(N_{1}+N_{2}-\frac{1}{2}\right)-C^{2}.
\end{align*}

Since $l\ge1,$ the main result follows if $[\textrm{I}]>0,[\textrm{II}]>0$
and $[\textrm{III}]>0.$ We deal with each part separately. Also,
from this point of this section, we will use inequalities (\ref{eq:N^2+N-3R})
and (\ref{eq:NN+N-3R}) on $T_{1}$ and $T_{2}.$ Both inequalities
are established based on the assumption that the rooted tree is not
$P_{2,2}$ or $P_{2,3}.$ The cases in which at least one of $T_{1}$
and $T_{2}$ is either $P_{2,2}$ or $P_{2,3}$ will be dealt with
separately afterwards. Note that $T_{1}$ and $T_{2}$ not being $P_{2,2}$
or $P_{2,3}$ implies $N_{1},N_{2}>6,$ since $\deg_{T_{1}}(v_{1}),\deg_{T_{2}}(v_{2})\ge2.$

\subsection{Part {[}I{]}}
\begin{flushleft}
Replacing the expressions for $A$ and $C$ in $2[\textrm{I}]$ and
simplifying yields
\begin{align*}
2[\textrm{I}]= & \:(N_{1}N_{2}+N_{L}+N_{R}-N_{1}-N_{2})-3\left(R_{1}+R_{2}+N_{1}N_{2}-\frac{1}{2}N_{1}-\frac{1}{2}N_{2}-\frac{1}{6}\right)\\
 & +(N_{1}+N_{2}+1)\left(N_{1}+N_{2}-\frac{1}{2}\right)\\
= & \:N_{L}+N_{R}+(N_{1}^{2}+N_{2}^{2}+N_{1}+N_{2}-3R_{1}-R_{2}).
\end{align*}
Applying (\ref{eq:N^2+N-3R}) yields
\begin{align*}
2[\textrm{I}] & \ge N_{L}+N_{R}>0.
\end{align*}
\par\end{flushleft}

\subsection{Part {[}II{]}}

Replacing the expressions for $B$ and $C$ in 2{[}II{]} and simplifying
yields
\begin{align*}
2[\textrm{II}]= & \:(N_{1}N_{2}+N_{L}+N_{R}-N_{1}-N_{2})(N_{1}+N_{2}+1)\\
 & -3(N_{1}R_{2}+N_{2}R_{1}-N_{1}N_{2}+R_{L}+R_{R}-R_{1}-R_{2})\\
= & \:N_{2}(N_{1}^{2}+N_{1}-3R_{1})+N_{1}(N_{2}^{2}+N_{2}-3R_{2})+N_{1}N_{L}+N_{2}N_{L}+N_{1}N_{R}\\
 & +N_{2}N_{R}+N_{L}+N_{R}-N_{1}^{2}-N_{2}^{2}-N_{1}-N_{2}-3R_{L}-3R_{R}+3R_{1}+3R_{2}.
\end{align*}
By (\ref{eq:N^2+N-3R}), the two terms in parentheses are both non-negative.
Therefore,
\begin{align*}
2[\textrm{II}]\ge & \:N_{1}N_{L}+N_{2}N_{L}+N_{1}N_{R}+N_{2}N_{R}+N_{L}+N_{R}\\
 & -N_{1}^{2}-N_{2}^{2}-N_{1}-N_{2}-3R_{L}-3R_{R}+3R_{1}+3R_{2}\\
= & \:(N_{1}N_{L}+N_{L}-N_{1}^{2}-N_{1}-3R_{L}+3R_{1})+(N_{2}N_{R}+N_{R}-N_{2}^{2}-N_{2}-3R_{R}+3R_{2})\\
 & +N_{2}N_{L}+N_{1}N_{R}.
\end{align*}
Finally, by (\ref{eq:3(R-R)}), the terms in both parentheses are
non-negative. Hence,
\begin{align*}
2[\textrm{II}]\ge & \:N_{2}N_{L}+N_{1}N_{R}>0.
\end{align*}

\subsection{Part {[}III{]}}

Replacing the expressions $A,B$ and $C$ in part {[}III{]}, we have
\begin{align*}
[\textrm{III}]= & \:3\left(R_{1}+R_{2}+N_{1}N_{2}-\frac{1}{2}N_{1}-\frac{1}{2}N_{2}-\frac{1}{6}\right)(N_{1}N_{2}+N_{L}+N_{R}-N_{1}-N_{2})\\
 & -3(N_{1}R_{2}+N_{2}R_{1}-N_{1}N_{2}+R_{L}+R_{R}-R_{1}-R_{2})\left(N_{1}+N_{2}-\frac{1}{2}\right)\\
 & -(N_{1}N_{2}+N_{L}+N_{R}-N_{1}-N_{2})^{2}.
\end{align*}
Simplification leads to
\begin{align*}
[\textrm{III}]= & \:(2N_{1}^{2}N_{2}^{2}+3N_{L}R_{1}+3N_{R}R_{2}-3N_{1}^{2}R_{2}-3N_{2}^{2}R_{1}-3N_{1}R_{L}-3N_{2}R_{R})\\
 & +N_{1}N_{2}N_{L}+N_{1}N_{2}N_{R}+\frac{1}{2}N_{1}^{2}N_{2}+\frac{1}{2}N_{1}N_{2}^{2}-N_{1}N_{2}+3N_{L}R_{2}+3N_{R}R_{1}\\
 & -N_{L}^{2}-2N_{L}N_{R}-N_{R}^{2}+\frac{1}{2}N_{1}N_{L}+\frac{1}{2}N_{2}N_{L}+\frac{1}{2}N_{1}N_{R}+\frac{1}{2}N_{2}N_{R}\\
 & -\frac{1}{2}N_{L}-\frac{1}{2}N_{R}+\frac{1}{2}N_{1}^{2}+\frac{1}{2}N_{2}^{2}+\frac{1}{2}N_{1}+\frac{1}{2}N_{2}-3N_{1}R_{R}-3N_{2}R_{L}\\
 & +\frac{3}{2}N_{1}R_{2}+\frac{3}{2}N_{2}R_{1}+\frac{3}{2}R_{L}+\frac{3}{2}R_{R}-\frac{3}{2}R_{1}-\frac{3}{2}R_{2}.
\end{align*}
Next, we regroup the terms in parentheses as follows and apply inequalities
(\ref{eq:N^2+N-3R}) and (\ref{eq:3(NR-RN)}).
\begin{align*}
 & 2N_{1}^{2}N_{2}^{2}+3N_{L}R_{1}+3N_{R}R_{2}-3N_{1}^{2}R_{2}-3N_{2}^{2}R_{1}-3N_{1}R_{L}-3N_{2}R_{R}\\
= & \:N_{1}^{2}(N_{2}^{2}-3R_{2})+N_{2}^{2}(N_{1}^{2}-3R_{1})+3(N_{L}R_{1}-N_{1}R_{L})+3(N_{R}R_{2}-N_{2}R_{R})\\
\ge & \:N_{1}^{2}(-N_{2})+N_{2}^{2}(-N_{1})+N_{L}^{2}-N_{1}N_{L}+N_{R}^{2}-N_{2}N_{R}.
\end{align*}
We plug this into the main expression, then simplification and regrouping
terms yields
\begin{align*}
[\textrm{III}]\ge & \:\frac{1}{2}(N_{1}N_{2}N_{L}+N_{1}N_{2}N_{R}-N_{1}^{2}N_{2}-N_{1}N_{2}^{2}-N_{1}N_{L}-N_{2}N_{R}+N_{1}^{2}+N_{2}^{2})\\
 & +\frac{1}{2}N_{1}N_{2}N_{L}+\frac{1}{2}N_{1}N_{2}N_{R}-N_{1}N_{2}+3N_{L}R_{2}+3N_{R}R_{1}-2N_{L}N_{R}\\
 & +\frac{1}{2}N_{2}N_{L}+\frac{1}{2}N_{1}N_{R}-\frac{1}{2}N_{L}-\frac{1}{2}N_{R}+\frac{1}{2}N_{1}+\frac{1}{2}N_{2}-3N_{1}R_{R}-3N_{2}R_{L}\\
 & +\frac{3}{2}N_{1}R_{2}+\frac{3}{2}N_{2}R_{1}+\frac{3}{2}R_{L}+\frac{3}{2}R_{R}-\frac{3}{2}R_{1}-\frac{3}{2}R_{2}.
\end{align*}
Note that $N_{1}N_{2}N_{L}$ and $N_{1}N_{2}N_{R}$ are split into
two halves in order to proceed. Again, we deal with the terms in parentheses
separately. Regrouping and applying (\ref{eq:3(R-R)}) yields
\begin{align*}
 & \:\frac{1}{2}(N_{1}N_{2}N_{L}+N_{1}N_{2}N_{R}-N_{1}^{2}N_{2}-N_{1}N_{2}^{2}-N_{1}N_{L}-N_{2}N_{R}+N_{1}^{2}+N_{2}^{2})\\
= & \:\frac{1}{2}(N_{2}-1)(N_{1}N_{L}-N_{1}^{2})+\frac{1}{2}(N_{1}-1)(N_{2}N_{R}-N_{2}^{2})\\
\ge & \:\frac{1}{2}(N_{2}-1)(3(R_{L}-R_{1})-N_{L}+N_{1})+\frac{1}{2}(N_{1}-1)(3(R_{R}-R_{2})-N_{R}+N_{2}).
\end{align*}
Plugging this into the main expression, after simplification and cancellation
of terms, we have
\begin{align*}
[\textrm{III}]\ge & \:\frac{1}{2}N_{1}N_{2}N_{L}+\frac{1}{2}N_{1}N_{2}N_{R}+3N_{L}R_{2}+3N_{R}R_{1}-2N_{L}N_{R}-\frac{3}{2}N_{1}R_{R}-\frac{3}{2}N_{2}R_{L}\\
= & \:\frac{1}{2}N_{1}(N_{2}N_{R}-3R_{R})+\frac{1}{2}N_{2}(N_{1}N_{L}-3R_{L})+3N_{L}R_{2}+3N_{R}R_{1}-2N_{L}N_{R}.
\end{align*}
Finally, applying (\ref{eq:NN+N-3R}) to the terms in parentheses
yields

\begin{align*}
[\textrm{III}]\ge & \:\frac{1}{2}N_{1}(-N_{R})+\frac{1}{2}N_{2}(-N_{L})+3N_{L}R_{2}+3N_{R}R_{1}-2N_{L}N_{R}\\
= & \:3N_{L}R_{2}+3N_{R}R_{1}-2N_{L}N_{R}-\frac{1}{2}N_{1}N_{R}-\frac{1}{2}N_{2}N_{L}\\
\ge & \:\frac{3}{2}N_{L}R_{2}+\frac{3}{2}N_{R}R_{1}>0.
\end{align*}
The last step follows from $N_{L}\ge N_{1},R_{1}\ge N_{1},N_{R}\ge N_{2},R_{2}\ge N_{2}$
and (\ref{eq:R>N}).

\subsection{Cases when at least one of $T_{1}$ and $T_{2}$ is small}

The inequalities (\ref{eq:N^2+N-3R}) and (\ref{eq:NN+N-3R}) break
when the rooted tree is $P_{2,2}$ or $P_{2,3}.$ For $P_{2,2},$
we have $N_{v}=4,N_{T}=6,R_{v}=8$ and $R_{T}=10.$ For $P_{2,3},$
we have $N_{v}=6,N_{T}=10,R_{v}=15$ and $R_{T}=20.$ There are five
subcases to be dealt with in order to complete the proof. Without
loss of generality, assume $N_{1}\le N_{2}.$ Also, recall that $v_{1}$
and $v_{2}$ both have degree at least 3 (or have degree at least
2 when considered as root of $T_{1}$ and $T_{2}$).
\begin{itemize}
\item $T_{1}=P_{2,2},T_{2}\neq P_{2,2},P_{2,3}.$ So (\ref{eq:N^2+N-3R})
and (\ref{eq:NN+N-3R}) apply to $T_{2}$ only. For $T_{1},$ we have
$N_{1}=4,R_{1}=8,N_{L}=6,R_{L}=10,$ thus 
\begin{align*}
 & A=\frac{35}{6}+\frac{7}{2}N_{2}+R_{2},\\
 & B=2+4N_{2}+3R_{2}+R_{R},\\
 & C=2+3N_{2}+N_{R}.
\end{align*}
Plugging these into the three parts yields
\[
2[\textrm{I}]=N_{2}^{2}+N_{2}-3R_{2}+N_{R}+2\overset{(2.5)}{\geq}N_{R}+2>0,
\]
and
\begin{align*}
2[\textrm{II}]= & \:(N_{2}N_{R}+N_{R}-3R_{R})+3(N_{2}^{2}+N_{2}-3R_{2})+4N_{R}+2N_{2}+4,
\end{align*}
so applying (\ref{eq:N^2+N-3R}) and (\ref{eq:NN+N-3R}) yields
\begin{align*}
2[\textrm{II}]\ge & \:4N_{R}+2N_{2}+4>0.
\end{align*}
At last,
\begin{align*}
2[\textrm{III}]= & \:2(3N_{R}R_{2}-3N_{2}R_{R}-N_{R}^{2})+9N_{2}N_{R}+21N_{2}^{2}\\
 & +27N_{R}+27N_{2}-21R_{R}-51R_{2}+20,
\end{align*}
so applying (\ref{eq:3(NR-RN)}) to the terms in parentheses yields
\begin{align*}
2[\textrm{III}]\ge & \:7N_{2}N_{R}+21N_{2}^{2}+27N_{R}+27N_{2}-21R_{R}-51R_{2}+20\\
= & \:(17N_{2}^{2}+17N_{2}-51R_{2})+(7N_{2}N_{R}+7N_{R}-21R_{R})\\
 & +4N_{2}^{2}+10N_{2}+20N_{R}+20\\
\ge & \:4N_{2}^{2}+10N_{2}+20N_{R}+20>0,
\end{align*}
where the last step used (\ref{eq:N^2+N-3R}) and (\ref{eq:NN+N-3R})
for the terms in parentheses.\\
\item $T_{1}=P_{2,3},T_{2}\neq P_{2,2},P_{2,3}.$ So (\ref{eq:N^2+N-3R})
and (\ref{eq:NN+N-3R}) apply to $T_{2}$ only. For $T_{1},$ we have
$N_{1}=6,R_{1}=15,N_{L}=10,R_{L}=20.$ Thus 
\begin{align*}
 & A=\frac{71}{6}+\frac{11}{2}N_{2}+R_{2},\\
 & B=5+9N_{2}+5R_{2}+R_{R},\\
 & C=4+5N_{2}+N_{R}.
\end{align*}
Plugging these into the three parts yields
\[
2[\textrm{I}]=N_{2}^{2}+N_{2}-3R_{2}+N_{R}+13\stackrel{(2.5)}{\ge}N_{R}+13>0,
\]
and
\begin{align*}
2[\textrm{II}]= & \:(N_{2}N_{R}+N_{R}-3R_{R})+5(N_{2}^{2}+N_{2}-3R_{2})+6N_{R}+7N_{2}+13,
\end{align*}
so applying (\ref{eq:N^2+N-3R}) and (\ref{eq:NN+N-3R}) yields
\begin{align*}
2[\textrm{II}]\ge & \:6N_{R}+7N_{2}+13>0.
\end{align*}
At last,
\begin{align*}
2[\textrm{III}]= & \:2(3N_{R}R_{2}-3N_{2}R_{R}-N_{R}^{2})+13N_{2}N_{R}+61N_{2}^{2}\\
 & +55N_{R}+80N_{2}-33R_{R}-141R_{2}+87,
\end{align*}
so applying (\ref{eq:3(NR-RN)}) to the terms in parentheses yields
\begin{align*}
2[\textrm{III}]\ge & \:11N_{2}N_{R}+61N_{2}^{2}+55N_{R}+80N_{2}-33R_{R}-141R_{2}+87\\
= & \:(47N_{2}^{2}+47N_{2}-141R_{2})+(11N_{2}N_{R}+11N_{R}-33R_{R})\\
 & +14N_{2}^{2}+33N_{2}+44N_{R}+87\\
\ge & \:14N_{2}^{2}+33N_{2}+44N_{R}+87>0,
\end{align*}
where the last step used (\ref{eq:N^2+N-3R}) and (\ref{eq:NN+N-3R})
for the terms in parentheses.\\
\item $T_{1}=T_{2}=P_{2,2}.$ We have $N_{1}=4,R_{1}=8,N_{L}=6,R_{L}=10,$
and the same for $N_{2},R_{2},N_{R},R_{R}.$ Then $A=\frac{167}{6},B=52,C=20,$
thus $[\textrm{I}]=2,[\textrm{II}]=12,[\textrm{III}]=100.$
\item $T_{1}=P_{2,2},T_{2}=P_{2,3}.$ We have $N_{1}=4,R_{1}=8,N_{L}=6,R_{L}=10,$
and $N_{2}=6,R_{2}=15,N_{R}=10,R_{R}=20.$ Then $A=\frac{251}{6},B=91,C=30,$
thus $[\textrm{I}]=\frac{9}{2},[\textrm{II}]=\frac{57}{2},[\textrm{III}]=\frac{543}{2}.$
\item $T_{1}=T_{2}=P_{2,3}.$ We have $N_{1}=6,R_{1}=15,N_{L}=10,R_{L}=20,$
and the same for $N_{2},R_{2},N_{R},R_{R}.$ Then $A=\frac{359}{6},B=154,C=44,$
thus $[\textrm{I}]=7,[\textrm{II}]=55,[\textrm{III}]=649.$
\end{itemize}
This concludes the proof of Theorem \ref{thm:mainthm}.\\

Now we know that the difference between global means is $\frac{1}{3}$
if and only if we contract an edge in a path, and for all other cases,
the difference is strictly greater than $\frac{1}{3}.$ A natural
question to ask at this stage is whether this lower bound is asymptotically
sharp for trees that are not paths. We answer this question in the
following proposition.

Let $T_{1}$ and $T_{2}$ be two rooted trees, and $T(T_{1},T_{2},l)$
be the tree constructed by connecting the two roots by a path of $l$
vertices.
\begin{prop}
Let $T_{1}$ and $T_{2}$ be two rooted trees and $T(l)=T(T_{1},T_{2},l).$
Then 
\[
\lim_{l\rightarrow\infty}\mu_{T(l)}-\mu_{T(l-1)}=\frac{1}{3}.
\]
\end{prop}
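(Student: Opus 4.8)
The plan is to reuse the exact expression for $\mu(l)$ that the main proof already derived and simply compute its asymptotic behaviour as $l\to\infty$. Recall that the paper established
\[
\mu(l)=\frac{\frac{1}{6}l^{3}+\left(\frac{N_{1}}{2}+\frac{N_{2}}{2}\right)l^{2}+Al+B}{\frac{1}{2}l^{2}+\left(N_{1}+N_{2}-\frac{1}{2}\right)l+C},
\]
where $A,B,C$ are constants depending only on the fixed rooted trees $T_{1}$ and $T_{2}$ (and not on $l$). The quantity we want is $\mu_{T(l)}-\mu_{T(l-1)}=d(l)=\mu(l)-\mu(l-1)$, and the goal is to show that $d(l)\to\tfrac13$.

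The key observation is that $\mu(l)$ is a ratio of a cubic to a quadratic in $l$, so for large $l$ it grows linearly; more precisely I would perform polynomial long division to write
\[
\mu(l)=\frac{1}{3}l+\beta+\frac{P(l)}{\frac{1}{2}l^{2}+\left(N_{1}+N_{2}-\frac{1}{2}\right)l+C},
\]
where $\beta$ is a constant and $P(l)$ is a polynomial of degree at most $1$ in $l$, both with coefficients that are fixed functions of $N_1,N_2,A,B,C$. The leading coefficient of the linear part is $\tfrac16/\tfrac12=\tfrac13$, which is exactly where the constant $\tfrac13$ comes from. Then
\[
d(l)=\mu(l)-\mu(l-1)=\frac{1}{3}+\left(\frac{P(l)}{Q(l)}-\frac{P(l-1)}{Q(l-1)}\right),
\]
writing $Q(l)$ for the quadratic denominator. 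Since $P$ has degree at most $1$ and $Q$ has degree $2$, each fraction $P(l)/Q(l)$ tends to $0$ as $l\to\infty$, hence the bracketed remainder tends to $0$ and $d(l)\to\tfrac13$.

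Concretely, I would carry out the division to identify $\beta$ and $P$, then bound $|P(l)/Q(l)|=O(1/l)$ using that $Q(l)$ is eventually positive and of order $l^{2}$ while $P(l)$ is of order $l$; the difference of two such $O(1/l)$ terms is itself $o(1)$, so passing to the limit gives the claim. An alternative, essentially equivalent, route is to substitute the closed form directly and compute $\lim_{l\to\infty}d(l)$ by standard rational-function limit rules, factoring out the highest power of $l$ from numerator and denominator of each $\mu$ term; this avoids explicitly naming $\beta$ and $P$ at the cost of a slightly longer algebraic manipulation.

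I expect no genuine obstacle here: the statement is a routine asymptotic consequence of the degree count in the explicit formula for $\mu(l)$, and the only mild bookkeeping is tracking that the $l^{2}$ and $l$ coefficients of the numerator and denominator are in the ratio that forces the linear term's slope to be exactly $\tfrac13$. The one point worth stating carefully is that $A,B,C,N_1,N_2$ are constants independent of $l$ (they are determined by the two rooted trees being joined), so that the limit genuinely exists and equals $\tfrac13$ regardless of the choice of $T_1$ and $T_2$; this is what makes the lower bound of Theorem~\ref{thm:mainthm} asymptotically sharp for every non-path tree of this joined form.
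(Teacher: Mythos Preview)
Your proposal is correct and essentially matches the paper's argument: both use the explicit formula for $\mu(l)$ as a cubic over a quadratic in $l$ and conclude by degree counting that $d(l)-\tfrac13\to 0$. The only cosmetic difference is that the paper combines $d(l)-\tfrac13$ into a single rational function (quadratic numerator over quartic denominator) before taking the limit, whereas you perform long division on $\mu(l)$ first; your ``alternative route'' is in fact precisely what the paper does.
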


\begin{proof}
Using the same notations, $d(l)-\frac{1}{3}$ can be expressed as
follows,
\begin{align*}
\frac{\left(\left(\frac{D}{2}+\frac{1}{4}\right)D+\frac{C}{6}-\frac{A}{2}\right)l(l-1)+\left(\left(\frac{D}{2}+\frac{1}{4}\right)C-\frac{B}{2}\right)(2l-1)+AC-BD}{\frac{1}{4}l^{2}(l-1)^{2}+Dl(l-1)(2l-1)+(D^{2}+C)l(l-1)+DC(2l-1)+C^{2}+\frac{C}{2}},
\end{align*}
where $D=N_{1}+N_{2}-\frac{1}{2},$ and $A,B$ and $C$ are the same
as defined in the beginning of this section. Observe that $A,B,C$
and $D$ are all constants given fixed $T_{1}$ and $T_{2}.$ Therefore,
$d(l)-\frac{1}{3}$ is the quotient of polynomials of $l$ whose numerator
is quadratic and denominator is of degree four. The statement follows
after letting $l$ go to infinity.
\end{proof}
Intuitively, as the length of the path tends to infinity, the structures
at both ends become negligible and the tree becomes more path-like.
Thus one can expect the difference to converge to that obtained for
a path.\\

\section{Proof of Inequalities}

The four inequalities (\ref{eq:N^2+N-3R})--(\ref{eq:3(R-R)}) will
be proved in this section. We start with (\ref{eq:3(NR-RN)}) and
(\ref{eq:3(R-R)}).

For (\ref{eq:3(NR-RN)}), we use a two-step strategy. First, in the
case $\deg(\textrm{root})=1,$ we prove that if the subtree rooted
at the only neighbour of the root satisfies the inequality then so
does the entire rooted tree. Then, we prove that if two rooted trees
both satisfy the inequality, then their ``union'', obtained by identifying
their roots, also satisfies the inequality. We will write $T_{v}=T_{1}\overset{v}{\cup}T_{2}$
for this union, as shown in the figure below. At last, the general
conclusion follows by induction on the order of the tree.

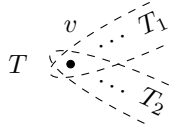
\begin{figure}[h]     
\begin{center}     
\begin{tikzpicture}     
	{          
	    
	\draw[fill] (0,0) circle (0.05);  

	\node[thick,rotate=25,anchor=center] at (0.55,0.3) {$\ldots$}; 
	\node[thick,rotate=-25,anchor=center] at (0.55,-0.3) {$\ldots$};

	\draw[dashed,rotate=25,anchor=center] (1.3,0.3) arc (90:270:1.6cm and 0.3cm);
	\draw[dashed,rotate=-25,anchor=center] (1.3,0.3) arc (90:270:1.6cm and 0.3cm);
	
    \node at (0,0.5) {$v$};
	\node at (-0.7,0) {$T$};
	\node[rotate=25,anchor=center] at (1.1,0.55) {$T_1$};
	\node[rotate=-25,anchor=center] at (1.1,-0.5) {$T_2$};   
	     } 	
\end{tikzpicture}\\ 
\end{center} 
\caption{$T$ is the union of $T_1$ and $T_2$.}
\label{fig:union} 
\end{figure} 
\begin{claim}
\label{claim:3(NR-RN)_DEG1}Let $T$ be a rooted tree with root at
$v$ and $\deg(v)=1.$ Let $v_{1}$ be the only vertex adjacent to
$v$ and $T_{1}=T-v.$ If $T_{1}$ satisfies (\ref{eq:3(NR-RN)}),
then so does $T.$
\end{claim}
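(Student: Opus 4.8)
The plan is to reduce everything for $T$ to the corresponding quantities for $T_{1}$, which I will write as $N_{v_{1}},R_{v_{1}},N_{T_{1}},R_{T_{1}}$. Since $v$ is a leaf attached to $v_{1}$, every subtree of $T$ either avoids $v$ (and is then a subtree of $T_{1}$) or contains $v$ (and is then either the singleton $\{v\}$, or is obtained by appending $v$ to a subtree of $T_{1}$ that contains $v_{1}$). Counting accordingly gives the relations
\[
N_{v}=N_{v_{1}}+1,\qquad R_{v}=R_{v_{1}}+N_{v_{1}}+1,
\]
\[
N_{T}=N_{T_{1}}+N_{v_{1}}+1,\qquad R_{T}=R_{T_{1}}+R_{v_{1}}+N_{v_{1}}+1.
\]

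The first thing I would exploit is that $N_{T}-N_{v}=N_{T_{1}}$, so the right-hand side of (\ref{eq:3(NR-RN)}) for $T$ collapses to $N_{T}^{2}-N_{v}N_{T}=N_{T}\cdot N_{T_{1}}$. Next I would substitute the relations above into $N_{T}R_{v}-N_{v}R_{T}$; after expansion the terms quadratic in $N_{v_{1}}$ and $R_{v_{1}}$ cancel, leaving
\[
N_{T}R_{v}-N_{v}R_{T}=N_{T_{1}}+N_{v_{1}}N_{T_{1}}+(N_{T_{1}}R_{v_{1}}-N_{v_{1}}R_{T_{1}})-R_{T_{1}}.
\]
Multiplying by $3$ and invoking the hypothesis $3(N_{T_{1}}R_{v_{1}}-N_{v_{1}}R_{T_{1}})\ge N_{T_{1}}^{2}-N_{v_{1}}N_{T_{1}}$ for $T_{1}$, the target inequality (\ref{eq:3(NR-RN)}) for $T$ reduces, after cancelling the right-hand side $N_{T}N_{T_{1}}$, to the single residual inequality
\[
N_{T_{1}}(N_{v_{1}}+2)\ge 3R_{T_{1}}.
\]

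The crux of the argument — and the step I would highlight — is that this residual inequality is exactly (\ref{eq:NN+2N>3R}) applied to the rooted tree $T_{1}$ at its root $v_{1}$, which holds for every tree and every vertex. Hence the claim follows. I do not anticipate a genuine obstacle here: the computation is routine algebra, and the only points requiring care are the correct bookkeeping of the pendant-vertex relations and the observation that, once the inductive hypothesis has been used to absorb the $(N_{T_{1}}R_{v_{1}}-N_{v_{1}}R_{T_{1}})$ term, what remains is precisely one of the auxiliary inequalities already in hand.
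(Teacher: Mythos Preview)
Your proof is correct and follows essentially the same approach as the paper: both use the pendant-leaf relations $N_{v}=N_{v_{1}}+1$, $R_{v}=R_{v_{1}}+N_{v_{1}}+1$, $N_{T}=N_{T_{1}}+N_{v_{1}}+1$, $R_{T}=R_{T_{1}}+R_{v_{1}}+N_{v_{1}}+1$, apply the inductive hypothesis to absorb the $N_{T_{1}}R_{v_{1}}-N_{v_{1}}R_{T_{1}}$ term, and finish with the auxiliary inequality (\ref{eq:NN+2N>3R}) for $T_{1}$ at $v_{1}$. The only cosmetic difference is that the paper carries the single expression $N_{v}N_{T}-N_{T}^{2}+3N_{T}R_{v}-3N_{v}R_{T}$ throughout, whereas you keep the two sides of (\ref{eq:3(NR-RN)}) separate before cancelling; the algebra and the key steps are identical.
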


\begin{proof}
Let $N_{v},R_{v},N_{T}$ and $R_{T}$ be the number of subtrees in
$T$ that contain $v,$ the sum of the cardinalities of all such trees,
the number of subtrees in $T$ and their sum of the cardinalities.
$N_{1},R_{1},N_{T_{1}}$ and $R_{T_{1}}$ are defined similarly with
respect to $T_{1}$ and $v_{1}.$

We start with
\begin{align*}
 & N_{v}=N_{1}+1,R_{v}=R_{1}+N_{1}+1,\\
 & N_{T}=N_{T_{1}}+N_{1}+1,R_{T}=R_{T_{1}}+R_{1}+N_{1}+1.
\end{align*}
By assumption, we have
\[
N_{1}N_{T_{1}}-N_{T_{1}}^{2}+3N_{T_{1}}R_{1}-3N_{1}R_{T_{1}}\ge0.
\]
Thus
\begin{align*}
 & N_{v}N_{T}-N_{T}^{2}+3N_{T}R_{v}-3N_{v}R_{T}\\
= & \:2N_{1}N_{T_{1}}+2N_{T_{1}}-N_{T_{1}}^{2}-3R_{T_{1}}+3(N_{T_{1}}R_{1}-N_{1}R_{T_{1}})\\
\ge & \:N_{1}N_{T_{1}}+2N_{T_{1}}-3R_{T_{1}}\\
\ge & \:0,
\end{align*}
where the last step uses (\ref{eq:NN+2N>3R}).
\end{proof}
\begin{claim}
\label{claim:3(NR-RN)_UNION}Let $T_{v}$ be a rooted tree with root
at $v$ and $\deg(v)\ge2.$ Let $T_{1}$ and $T_{2}$ be two non-singleton
subtrees also rooted at $v$ such that $T_{v}=T_{1}\overset{v}{\cup}T_{2}.$
If $T_{1}$ and $T_{2}$ both satisfy (\ref{eq:3(NR-RN)}), then so
does $T_{v}.$
\end{claim}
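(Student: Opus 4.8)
The plan is to establish the ``union'' claim by expressing all the subtree
statistics of $T_{v}=T_{1}\overset{v}{\cup}T_{2}$ in terms of those of
$T_{1}$ and $T_{2}$, and then reducing the target inequality
(\ref{eq:3(NR-RN)}) for $T_{v}$ to the inequalities already known for the
pieces. First I would set up notation: write $N_{1},R_{1}$ for the rooted
quantities (subtrees containing $v$) of $T_{1}$ and $N_{T_{1}},R_{T_{1}}$
for the global quantities, and analogously $N_{2},R_{2},N_{T_{2}},R_{T_{2}}$
for $T_{2}$. The key bookkeeping observation is that a subtree of $T_{v}$
containing $v$ is obtained by gluing a subtree of $T_{1}$ containing $v$ to
a subtree of $T_{2}$ containing $v$ (sharing only $v$), so
$N_{v}=N_{1}N_{2}$ and, because the shared vertex $v$ is double-counted,
$R_{v}=R_{1}N_{2}+N_{1}R_{2}-N_{1}N_{2}$. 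For the global counts, every
subtree of $T_{v}$ either avoids $v$ (so lies entirely in one of the two
branches minus $v$) or contains $v$; this gives
$N_{T}=N_{T_{1}}+N_{T_{2}}-N_{1}-N_{2}+N_{1}N_{2}$ and a matching expression
for $R_{T}$, where the $-N_{i}$ terms remove the double-counting of subtrees
meeting $v$ and the product term reinserts the $v$-containing subtrees of
the full tree.

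Once these identities are in hand, I would substitute them into the
quantity $N_{v}N_{T}-N_{T}^{2}+3N_{T}R_{v}-3N_{v}R_{T}$ and expand. The goal
is to show this expression is a nonnegative combination of the two hypotheses
$N_{i}N_{T_{i}}-N_{T_{i}}^{2}+3N_{T_{i}}R_{i}-3N_{i}R_{T_{i}}\ge0$ together
with the auxiliary inequalities already available, most likely
(\ref{eq:NN+2N>3R}), (\ref{eq:R>N}), and possibly (\ref{eq:JamisonIneq}) or
(\ref{eq:StephanIneq}). I expect the expansion to produce cross terms
between the two branches (terms mixing indices $1$ and $2$) that are not
directly covered by either single-branch hypothesis, and the crux will be
controlling exactly those cross terms.

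The main obstacle, accordingly, is the algebraic regrouping: after
substitution one gets a large polynomial in the eight quantities
$N_{i},R_{i},N_{T_{i}},R_{T_{i}}$, and the task is to split it cleanly as
$\alpha_{1}\cdot(\text{hypothesis for }T_{1})+\alpha_{2}\cdot(\text{hypothesis
for }T_{2})+(\text{nonnegative remainder})$ with $\alpha_{1},\alpha_{2}\ge0$.
By symmetry in the two branches I would expect the coefficients to be
symmetric, e.g.\ $\alpha_{1}$ expressible through $N_{2}$ or
$N_{T_{2}}$ and vice versa, so that the non-singleton assumption
($\deg(v)\ge2$, forcing $N_{i},N_{T_{i}}\ge2$) guarantees positivity. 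The
residual terms should then be handled by feeding in $R_{i}\ge N_{T_{i}}$ from
(\ref{eq:R>N}) and the contraction-type bound (\ref{eq:NN+2N>3R}) applied to
each branch, exactly as the degree-one claim used (\ref{eq:NN+2N>3R}) at its
final step.

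With Claims~\ref{claim:3(NR-RN)_DEG1} and \ref{claim:3(NR-RN)_UNION} both
proved, the inequality (\ref{eq:3(NR-RN)}) for an arbitrary rooted tree
follows by induction on the order: a rooted tree is either a single vertex
(base case, checked directly), a tree whose root has degree one (reduce via
Claim~\ref{claim:3(NR-RN)_DEG1} to a smaller tree), or a tree whose root has
degree at least two (split off one branch as $T_{1}$ and the remainder as
$T_{2}$, both strictly smaller, and apply
Claim~\ref{claim:3(NR-RN)_UNION}).
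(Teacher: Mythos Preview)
Your plan is correct and follows the paper's approach exactly: the paper writes down precisely the identities you state, applies the two branch hypotheses with multipliers $N_{2}$ and $N_{1}$ respectively (your symmetric $\alpha_{i}$), and then closes the residual cross terms using only (\ref{eq:R>N}) together with $N_{T_{i}}\ge N_{i}$ and $N_{i}\ge2$ from the non-singleton assumption. Inequality (\ref{eq:NN+2N>3R}) is not needed in this claim, but otherwise your outline is accurate.
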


\begin{proof}
Using the same notations, we start with
\begin{align*}
 & N_{v}=N_{1}N_{2},R_{v}=N_{1}R_{2}+N_{2}R_{1}-N_{1}N_{2},\\
 & N_{T}=N_{1}N_{2}+N_{T_{1}}-N_{1}+N_{T_{2}}-N_{2},\\
 & R_{T}=N_{1}R_{2}+N_{2}R_{1}-N_{1}N_{2}+R_{T_{1}}-R_{1}+R_{T_{2}}-R_{2},
\end{align*}
and the assumption provides
\begin{align*}
3(N_{T_{1}}R_{1}-N_{1}R_{T_{1}}) & \ge N_{T_{1}}^{2}-N_{1}N_{T_{1}},\\
3(N_{T_{2}}R_{2}-N_{2}R_{T_{2}}) & \ge N_{T_{2}}^{2}-N_{2}N_{T_{2}}.
\end{align*}
Now
\begin{align*}
 & N_{v}N_{T}-N_{T}^{2}+3N_{T}R_{v}-3N_{v}R_{T}\\
= & -(N_{T_{1}}-N_{1}+N_{T_{2}}-N_{2})^{2}-4N_{1}N_{2}(N_{T_{1}}-N_{1})-4N_{1}N_{2}(N_{T_{2}}-N_{2})\\
 & +3\left((N_{T_{1}}-N_{1}+N_{T_{2}}-N_{2})(N_{1}R_{2}+N_{2}R_{1})-N_{1}N_{2}(R_{T_{1}}-R_{1}+R_{T_{2}}-R_{2})\right).
\end{align*}
Simplifying and regrouping the terms in the last line separately,
we have
\begin{align*}
 & 3((N_{T_{1}}-N_{1}+N_{T_{2}}-N_{2})(N_{1}R_{2}+N_{2}R_{1})-N_{1}N_{2}(R_{T_{1}}-R_{1}+R_{T_{2}}-R_{2}))\\
= & \:3N_{1}N_{T_{1}}R_{2}+3N_{2}N_{T_{2}}R_{1}-3N_{1}^{2}R_{2}-3N_{2}^{2}R_{1}\\
 & +3N_{1}(N_{T_{2}}R_{2}-N_{2}R_{T_{2}})+3N_{2}(N_{T_{1}}R_{1}-N_{1}R_{T_{1}})\\
\ge & \:3N_{1}N_{T_{1}}R_{2}+3N_{2}N_{T_{2}}R_{1}-3N_{1}^{2}R_{2}-3N_{2}^{2}R_{1}\\
 & +N_{1}(N_{T_{2}}^{2}-N_{2}N_{T_{2}})+N_{2}(N_{T_{1}}^{2}-N_{1}N_{T_{1}})\\
= & \:(N_{T_{1}}-N_{1})(3N_{1}R_{2}+N_{2}N_{T_{1}})+(N_{T_{2}}-N_{2})(3N_{2}R_{1}+N_{1}N_{T_{2}}),
\end{align*}
where the inequality follows from the assumption. Plugging this into
the entire expression, we have
\begin{align*}
 & N_{v}N_{T}-N_{T}^{2}+3N_{T}R_{v}-3N_{v}R_{T}\\
\ge & -(N_{T_{1}}-N_{1}+N_{T_{2}}-N_{2})^{2}-4N_{1}N_{2}(N_{T_{1}}-N_{1})-4N_{1}N_{2}(N_{T_{2}}-N_{2})\\
 & +(N_{T_{1}}-N_{1})(3N_{1}R_{2}+N_{2}N_{T_{1}})+(N_{T_{2}}-N_{2})(3N_{2}R_{1}+N_{1}N_{T_{2}})\\
= & \:(N_{T_{1}}-N_{1})(3N_{1}R_{2}+N_{2}N_{T_{1}}-4N_{1}N_{2}-N_{T_{1}}+N_{1}-N_{T_{2}}+N_{2})\\
 & +(N_{T_{2}}-N_{2})(3N_{2}R_{1}+N_{1}N_{T_{2}}-4N_{1}N_{2}-N_{T_{1}}+N_{1}-N_{T_{2}}+N_{2}).
\end{align*}
In the last step, as $N_{T_{1}}\ge N_{1}$ and $N_{T_{2}}\ge N_{2},$
the main inequality follows if 
\begin{align*}
3N_{1}R_{2}+N_{2}N_{T_{1}}-4N_{1}N_{2}-N_{T_{1}}+N_{1}-N_{T_{2}}+N_{2} & \ge0\\
3N_{2}R_{1}+N_{1}N_{T_{2}}-4N_{1}N_{2}-N_{T_{1}}+N_{1}-N_{T_{2}}+N_{2} & \ge0.
\end{align*}
Due to symmetry, it suffices to prove only the first one. By (\ref{eq:R>N}),
$R_{2}\ge N_{T_{2}},$ hence
\begin{align*}
\textrm{left hand side } & \ge3N_{1}N_{T_{2}}+N_{2}N_{T_{1}}-4N_{1}N_{2}-N_{T_{1}}+N_{1}-N_{T_{2}}+N_{2}\\
 & =(N_{T_{2}}-N_{2})(3N_{1}-1)+(N_{T_{1}}-N_{1})(N_{2}-1)\\
 & \ge0.
\end{align*}
\end{proof}
\begin{lem}
\label{lem:3(NR-RN)}Let $T$ be a rooted tree with root at $v.$
Then $T$ satisfies (\ref{eq:3(NR-RN)}).
\end{lem}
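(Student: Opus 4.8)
The plan is to prove Lemma \ref{lem:3(NR-RN)} by induction on the order $n$ of the rooted tree $T$, using the two claims just established as the inductive machinery. The key observation is that every rooted tree can be decomposed in one of two ways that match exactly the two claims: either the root $v$ has degree $1$, in which case Claim \ref{claim:3(NR-RN)_DEG1} reduces the problem to the smaller rooted tree $T-v$; or the root $v$ has degree at least $2$, in which case I can split the subtrees hanging off $v$ into two nonempty groups and write $T_v = T_1 \overset{v}{\cup} T_2$ for two strictly smaller rooted trees $T_1, T_2$ (both rooted at $v$), so that Claim \ref{claim:3(NR-RN)_UNION} applies.

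First I would set up the base case. The smallest rooted tree is the single vertex $v$ (the singleton), for which $N_v = N_T = 1$ and $R_v = R_T = 1$; substituting into (\ref{eq:3(NR-RN)}) gives $3(1\cdot 1 - 1\cdot 1) = 0 \ge 1 - 1 = 0$, so the inequality holds (with equality). I would verify this directly.

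Next I would carry out the inductive step. Assume the inequality holds for all rooted trees of order strictly less than $n$, and let $T$ be rooted at $v$ with $|T| = n \ge 2$. If $\deg(v) = 1$, then $T_1 := T - v$ is a rooted tree (rooted at the unique neighbour of $v$) of order $n-1$, which by the inductive hypothesis satisfies (\ref{eq:3(NR-RN)}); Claim \ref{claim:3(NR-RN)_DEG1} then gives the inequality for $T$. If $\deg(v) \ge 2$, let $u_1, \dots, u_k$ be the neighbours of $v$ with $k \ge 2$; I would let $T_1$ be the rooted subtree consisting of $v$ together with the branch at $u_1$, and $T_2$ the rooted subtree consisting of $v$ together with the branches at $u_2, \dots, u_k$. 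Both $T_1$ and $T_2$ are non-singleton rooted trees of order strictly less than $n$, so by the inductive hypothesis they satisfy (\ref{eq:3(NR-RN)}), and $T_v = T_1 \overset{v}{\cup} T_2$; Claim \ref{claim:3(NR-RN)_UNION} then yields the inequality for $T$. Either way the inductive step closes, completing the proof.

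The steps here are essentially routine once the two claims are in hand, so I do not anticipate a genuine obstacle; the only point requiring a little care is to confirm that the decomposition in the degree-$\ge 2$ case produces two \emph{non-singleton} rooted trees (as Claim \ref{claim:3(NR-RN)_UNION} requires), which holds because each of $T_1$ and $T_2$ contains $v$ together with at least one further vertex, and that both decompositions strictly decrease the order so that the induction is well-founded.
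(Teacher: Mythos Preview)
Your proof is correct and follows essentially the same inductive strategy as the paper: induction on the order of $T$, with the singleton as base case and Claims \ref{claim:3(NR-RN)_DEG1} and \ref{claim:3(NR-RN)_UNION} supplying the two halves of the inductive step. The only cosmetic difference is that the paper first passes to the branches $T_1,\dots,T_d$ rooted at the neighbours of $v$, applies Claim \ref{claim:3(NR-RN)_DEG1} to each $T_i+v$, and then merges via Claim \ref{claim:3(NR-RN)_UNION}, whereas you split directly at $v$ when $\deg(v)\ge 2$; both organizations are equivalent.
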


\begin{proof}
Let $N_{v}$ be the number of subtrees in $T$ containing $v.$ Let
$v_{1},\ldots,v_{d}$ be all the vertices adjacent to $v,$ with integer
$d\ge1.$ Let $T_{1},\ldots,T_{d}$ be the rooted subtrees with roots
at $v_{1},\ldots,v_{d}.$ We prove the statement by induction on the
order of $T.$ For $T$ being a singleton tree, both sides of the
inequality are equal to 3. Now assume (\ref{eq:3(NR-RN)}) holds for
$T_{1},\ldots,T_{d}.$ Then by Claim \ref{claim:3(NR-RN)_DEG1}, it
also holds for the subtrees $T_{1}+v,\ldots,T_{d}+v$ with roots all
at $v.$ If $d=1,$ then we are done. If $d\ge2,$ one can apply Claim
\ref{claim:3(NR-RN)_UNION} to $T_{1}+v,\ldots,T_{d}+v,$ and the
statement follows.
\end{proof}
Next, we go to the inequality (\ref{eq:3(R-R)}). We use a similar
but modified two-step method as above.
\begin{claim}
\label{claim:3(R-R)_DEG1}Let $T$ be a rooted tree with root at $v$
and $\deg(v)=1.$ Let $v_{1}$ be the closest vertex to $v$ with
$\deg(v_{1})>2.$ Let $T_{1}$ be the subtree containing $v_{1}$
obtained by removing the path $vv_{1}.$ If $T_{1}$ satisfies (\ref{eq:3(R-R)}),
then so does $T.$
\end{claim}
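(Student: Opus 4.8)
The plan is to express the four subtree statistics of $T$ in terms of those of $T_1$ together with the length of the pendant path, substitute into the target inequality, and then feed in the hypothesis. Write the path from $v$ to $v_1$ as $v=x_0,x_1,\ldots,x_m=v_1$, where $m\ge1$ and the internal vertices $x_1,\ldots,x_{m-1}$ all have degree two in $T$; let $N_1,R_1,N_{T_1},R_{T_1}$ denote the statistics of $T_1$ rooted at $v_1$.

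First I would carry out the subtree enumeration. Since $v$ is a leaf and the internal path vertices have degree two, a subtree of $T$ containing $v$ is either a prefix path $x_0\cdots x_j$ with $j<m$, or the whole path extended by a subtree of $T_1$ through $v_1$; this gives $N_v=N_1+m$ and $R_v=R_1+mN_1+\binom{m+1}{2}$. An arbitrary subtree of $T$ is then classified by whether it meets $v_1$: a subtree avoiding $v_1$ is either a subpath of $x_0\cdots x_{m-1}$ or a subtree of $T_1$ missing $v_1$, while a subtree containing $v_1$ is a subtree of $T_1$ through $v_1$ extended by a (possibly empty) suffix of the path. Summing orders yields
\[
N_T=N_{T_1}+mN_1+\binom{m+1}{2},\qquad R_T=R_{T_1}+mR_1+\binom{m+1}{2}N_1+\binom{m+2}{3}.
\]

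Having these four relations, I would substitute them into inequality (\ref{eq:3(R-R)}) written for $T$. The terms carrying $N_{T_1}$ and $R_{T_1}$ assemble into the combination $N_1N_{T_1}+N_{T_1}-3R_{T_1}$ plus a leftover $mN_{T_1}$; bounding the former below by $N_1^2+N_1-3R_1$ using the hypothesis that $T_1$ satisfies (\ref{eq:3(R-R)}) and then simplifying collapses everything to $m\bigl(N_1^2+N_1+N_{T_1}-3R_1\bigr)$. The pleasant feature driving this collapse is the identity $(m+4)\binom{m+1}{2}-3\binom{m+2}{3}=m(m+1)$, which makes the purely-path contributions cancel against the linear-in-$N_1$ terms. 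Finally, $N_1^2+N_1+N_{T_1}\ge3R_1$ is exactly (\ref{eq:StephanIneq}) applied to $T_1$ at $v_1$, so the residual is nonnegative and the claim follows.

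The main obstacle is the bookkeeping in the enumeration step: correctly accounting for the \emph{mixed} subtrees that use part of the degree-two path together with part of $T_1$, and for the subtrees living entirely on the path, so that the cubic-in-$m$ cardinality sum $\binom{m+2}{3}$ and the cross term $\binom{m+1}{2}N_1$ come out right. Once the four relations are pinned down, the remaining manipulation is routine algebra that telescopes cleanly thanks to (\ref{eq:StephanIneq}).
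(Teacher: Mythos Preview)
Your enumeration of $N_v,R_v,N_T,R_T$ is identical to the paper's (with $m=l$), and your reduction to $m\bigl(N_1^2+N_1+N_{T_1}-3R_1\bigr)$ is correct: after substitution one gets
\[
(m-1)(N_1^2+N_1-3R_1)+\bigl[N_1N_{T_1}+N_{T_1}-3R_{T_1}\bigr]+mN_{T_1},
\]
and applying the hypothesis to the bracketed term collapses this exactly as you describe.

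The difference lies in the final bounding step. The paper instead keeps the decomposition $l(N_1^2+N_1-3R_1)+[\text{hypothesis term}]+lN_{T_1}$, applies the hypothesis to drop the middle bracket, and then invokes (\ref{eq:N^2+N-3R}) to kill $N_1^2+N_1-3R_1$. Since (\ref{eq:N^2+N-3R}) fails when $T_1=P_{2,2}$ or $P_{2,3}$, the paper must verify those two cases by hand. Your route, using (\ref{eq:StephanIneq}) on the combined term $N_1^2+N_1+N_{T_1}-3R_1$, applies to every $T_1$ without exception and so bypasses that case analysis entirely. This is a small but genuine streamlining: (\ref{eq:StephanIneq}) is exactly the right tool here, being the refinement of (\ref{eq:JamisonIneq}) that absorbs the extra $N_{T_1}$ you have in hand.
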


\begin{figure}[h]     
\begin{center}     
\begin{tikzpicture}     
	{          
	\draw[thick] (0,0)--(0.5,0); 
	\node at (0.78,0) {$\ldots$};  
	\draw[fill] (0,0) circle (0.07);     
	\draw[fill] (1.5,0) circle (0.07);
	\draw[thick] (2.1,-0.3)--(1.5,0)--(2.1,0.3);
	\draw[fill] (2.1,0.3) circle (0.07);
	\draw[fill] (2.1,-0.3) circle (0.07);
	\draw[thick] (1.1,0)--(1.5,0); 
	\draw[dashed] (1.9,1) arc (90:270:2.4cm and 1cm);
	\draw[dashed] (2.5,0.8) arc (90:270:1.5cm and 0.8cm);
	
	\node at (-0.7,0.4) {$T$};
	\node at (2.8,0.8) {$T_1$};
    \node at (0,0.3) {$v$};
	\node at (1.5,0.3) {$v_1$};
	\node at (2.2,0) {$\ldots$};  

	\node[thick,rotate=23,anchor=center] at (2.4,0.5) {$\ldots$}; 
	\node[thick,rotate=-23,anchor=center] at (2.4,-0.5) {$\ldots$}; 
	     } 	
\end{tikzpicture}\\ 
\end{center} 
\caption{$T$ and $T_1$.}
\label{fig:3(R-R)_deg1} 
\end{figure}
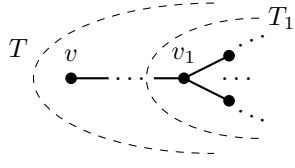 
\begin{proof}
Let $|T|-|T_{1}|=l>0.$ We have
\begin{align*}
N_{v}=N_{1}+l,\: & R_{v}=R_{1}+lN_{1}+\frac{l(l+1)}{2},
\end{align*}
and 
\begin{align*}
N_{T} & =\sum_{i=1}^{l}(N_{1}+i)+N_{T_{1}}=N_{T_{1}}+lN_{1}+\frac{l(l+1)}{2},\\
R_{T} & =\sum_{i=1}^{l}\left(R_{1}+iN_{1}+\frac{i(i+1)}{2}\right)+R_{T_{1}}\\
 & =R_{T_{1}}+lR_{1}+\frac{l(l+1)}{2}N_{1}+\frac{l(l+1)(l+2)}{6}.
\end{align*}
By assumption, we also have
\[
N_{1}N_{T_{1}}-N_{1}^{2}+N_{T_{1}}-N_{1}-3(R_{T_{1}}-R_{1})\ge0.
\]
Now
\begin{align*}
 & N_{v}N_{T}-N_{v}^{2}+N_{T}-N_{v}-3(R_{T}-R_{v})\\
= & \:l(N_{1}^{2}+N_{1}-3R_{1})+N_{1}N_{T_{1}}-N_{1}^{2}+N_{T_{1}}-N_{1}-3(R_{T_{1}}-R_{1})+lN_{T_{1}}.
\end{align*}
Applying (\ref{eq:N^2+N-3R}) to the first expression in parentheses
yields
\begin{align*}
 & N_{v}N_{T}-N_{v}^{2}+N_{T}-N_{v}-3(R_{T}-R_{v})\\
\ge & \:N_{1}N_{T_{1}}-N_{1}^{2}+N_{T_{1}}-N_{1}-3(R_{T_{1}}-R_{1})+lN_{T_{1}}\\
\ge & \:lN_{T_{1}}>0,
\end{align*}
where the last step uses the assumption. Recall that inequality (\ref{eq:N^2+N-3R}),
which will be proved later, requires $N_{1}>6.$ However, as $\deg(v_{1})>2,$
we only need to deal with $N_{1}=4$ and $N_{1}=6,$ which corresponds
to the two cases $P_{2,2}$ and $P_{2,3}.$ In the first case, $T_{1}=P_{2,2},$
and we have
\[
N_{1}=4,N_{T_{1}}=6,R_{1}=8,R_{T_{1}}=10.
\]
Then
\begin{align*}
N_{v}N_{T}-N_{v}^{2}+N_{T}-N_{v}-3(R_{T}-R_{v}) & =2l+4>0.
\end{align*}
In the second case, we have 
\[
N_{1}=6,N_{T_{1}}=10,R_{1}=15,R_{T_{1}}=20.
\]
Then
\begin{align*}
N_{v}N_{T}-N_{v}^{2}+N_{T}-N_{v}-3(R_{T}-R_{v}) & =7l+13>0.\\
\end{align*}
\end{proof}
\begin{claim}
\label{claim:3(R-R)_UNION}Let $T$ be a rooted tree with root $v$
and $\deg(v)\ge2.$ Let $T_{1}$ and $T_{2}$ be two non-singleton
subtrees also rooted at $v$ such that $T_{v}=T_{1}\overset{v}{\cup}T_{2}.$
If $T_{1}$ and $T_{2}$ both satisfy (\ref{eq:3(R-R)}), then so
does $T_{v}.$
\end{claim}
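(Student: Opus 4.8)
The plan is to mirror the structure of Claim~\ref{claim:3(NR-RN)_UNION}, the union step for inequality (\ref{eq:3(NR-RN)}), but now for the quantity governing (\ref{eq:3(R-R)}). I would start by recording the standard gluing identities for the union $T_{v}=T_{1}\overset{v}{\cup}T_{2}$, namely $N_{v}=N_{1}N_{2}$ and $R_{v}=N_{1}R_{2}+N_{2}R_{1}-N_{1}N_{2}$ (a subtree containing the root is determined by an independent choice on each side, and its order double-counts $v$), together with $N_{T}=N_{T_{1}}+N_{T_{2}}+N_{1}N_{2}-N_{1}-N_{2}$ and the matching expression $R_{T}=R_{T_{1}}+R_{T_{2}}+N_{1}R_{2}+N_{2}R_{1}-N_{1}N_{2}-R_{1}-R_{2}$, obtained by partitioning subtrees of $T_{v}$ into those avoiding $v$ (living entirely in $T_{1}$ or $T_{2}$) and those containing $v$.

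Next I would substitute these into the target form $N_{v}N_{T}-N_{v}^{2}+N_{T}-N_{v}-3(R_{T}-R_{v})$ and simplify. The hypotheses supply, for each $i\in\{1,2\}$, the bound $N_{i}N_{T_{i}}-N_{i}^{2}+N_{T_{i}}-N_{i}-3(R_{T_{i}}-R_{i})\ge 0$, so the natural strategy is to isolate the ``pure side~$i$'' contributions $N_{T_{i}}-N_{i}$ and $R_{T_{i}}-R_{i}$ and arrange the remaining terms so that each hypothesis can be applied with a nonnegative coefficient. Concretely, I expect the cross terms involving $N_{v}=N_{1}N_{2}$ to multiply the side-$i$ data by a factor like $N_{j}$ (the opposite side), and so I would try to write the full expression as a nonnegative combination of the two hypothesis quantities plus an explicitly nonnegative leftover, exactly as was done in the union step for (\ref{eq:3(NR-RN)}), where the residual was factored as $(N_{T_{1}}-N_{1})(N_{2}-1)+(N_{T_{2}}-N_{2})(3N_{1}-1)\ge 0$.

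The main obstacle, as in the companion claim, will be controlling the residual terms after the hypotheses have been subtracted off. I anticipate that merely using the two assumptions is not enough, and that I will need the auxiliary inequalities already catalogued in the excerpt---most likely (\ref{eq:R>N}) (i.e.\ $R_{i}\ge N_{T_{i}}$, or its appropriate root-analogue) to replace an awkward $R$-term by an $N$-term, and possibly (\ref{eq:N^2+N-3R}) applied to one side. Since $T_{1}$ and $T_{2}$ are non-singleton with the root having degree at least two in the union, the relevant quantities satisfy $N_{T_{i}}\ge N_{i}\ge 2$, which should make the leftover factors nonnegative. The goal is to reduce everything to a sum of products of differences of the form $(N_{T_{i}}-N_{i})(\text{nonnegative})$ together with a clearly nonnegative constant or monomial remainder, yielding the claim.

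Concretely, after the dust settles I would aim for a final display of the shape
\begin{align*}
 & N_{v}N_{T}-N_{v}^{2}+N_{T}-N_{v}-3(R_{T}-R_{v})\\
\ge & \:(N_{T_{1}}-N_{1})\cdot(\text{nonneg}) + (N_{T_{2}}-N_{2})\cdot(\text{nonneg}) \ge 0,
\end{align*}
where each nonnegative factor is verified using $N_{1},N_{2}\ge 2$ and the cited auxiliary bounds; if a stubborn cross term resists this factoring, the fallback is to absorb it via (\ref{eq:R>N}) exactly as in the proof of Claim~\ref{claim:3(NR-RN)_UNION}.
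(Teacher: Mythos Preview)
Your plan is essentially the paper's proof: record the gluing identities, substitute, apply the two hypotheses to eliminate the $3(R_{T_i}-R_i)$ terms, and factor the residual into a sum of nonnegative products of the form $(N_{T_i}-N_i)(\text{nonneg})$. In fact the proof is cleaner than you anticipate---no auxiliary inequality beyond the hypotheses and the trivial bounds $N_1,N_2\ge 2$, $N_{T_i}\ge N_i$ is needed; after substitution and applying the two assumptions one obtains exactly
\[
(N_{2}-2)(N_{1}N_{T_{1}}-N_{1}^{2})+(N_{1}-2)(N_{2}N_{T_{2}}-N_{2}^{2})+N_{1}(N_{T_{1}}-N_{1})+N_{2}(N_{T_{2}}-N_{2})\ge 0,
\]
so neither (\ref{eq:R>N}) nor (\ref{eq:N^2+N-3R}) is required.
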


\begin{proof}
We start with
\begin{align*}
 & N_{v}=N_{1}N_{2},\:R_{v}=N_{1}R_{2}+N_{2}R_{1}-N_{1}N_{2},\\
 & N_{T}=N_{1}N_{2}+N_{T_{1}}-N_{1}+N_{T_{2}}-N_{2},\\
 & R_{T}=N_{1}R_{2}+N_{2}R_{1}-N_{1}N_{2}+R_{T_{1}}-R_{1}+R_{T_{2}}-R_{2},
\end{align*}
and the assumption yields
\begin{align*}
 & N_{1}N_{T_{1}}-N_{1}^{2}+N_{T_{1}}-N_{1}-3(R_{T_{1}}-R_{1})\ge0,\\
 & N_{2}N_{T_{2}}-N_{2}^{2}+N_{T_{2}}-N_{2}-3(R_{T_{2}}-R_{2})\ge0.
\end{align*}
Now
\begin{align*}
 & N_{v}N_{T}-N_{v}^{2}+N_{T}-N_{v}-3(R_{T}-R_{v})\\
= & \:N_{1}N_{2}(N_{1}N_{2}+N_{T_{1}}-N_{1}+N_{T_{2}}-N_{2})+N_{T_{1}}-N_{1}+N_{T_{2}}-N_{2}\\
 & -N_{1}^{2}N_{2}^{2}-3(R_{T_{1}}-R_{1}+R_{T_{2}}-R_{2}).
\end{align*}
For the last expression in parentheses, we use the two inequalities
provided by the assumption,
\begin{align*}
 & N_{v}N_{T}-N_{v}^{2}+N_{T}-N_{v}-3(R_{T}-R_{v})\\
\ge & \:N_{1}N_{2}(N_{1}N_{2}+N_{T_{1}}-N_{1}+N_{T_{2}}-N_{2})+N_{T_{1}}-N_{1}+N_{T_{2}}-N_{2}\\
 & -N_{1}^{2}N_{2}^{2}-(N_{1}N_{T_{1}}+N_{T_{1}}-N_{1}^{2}-N_{1}+N_{2}N_{T_{2}}+N_{T_{2}}-N_{2}^{2}-N_{2})\\
= & \:(N_{2}-2)(N_{1}N_{T_{1}}-N_{1}^{2})+(N_{1}-2)(N_{2}N_{T_{2}}-N_{2}^{2})\\
 & +N_{1}(N_{T_{1}}-N_{1})+N_{2}(N_{T_{2}}-N_{2})\\
\ge & \:0,
\end{align*}
as $N_{1},N_{2}\ge2$ and $N_{T_{1}}\ge N_{1},N_{T_{2}}\ge N_{2}.$
Indeed, both observations follow from the assumption that $T_{1},T_{2}$
are non-singleton subtrees.
\end{proof}
After the two claims, (\ref{eq:3(R-R)}) holds for trees with root
of degree 1.
\begin{claim}
\label{claim:3(R-R)_DEG1GENERAL}Let $T$ be a rooted tree with root
$v$ and $\deg(v)=1.$ Then $T$ satisfies (\ref{eq:3(R-R)}).
\end{claim}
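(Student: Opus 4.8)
The plan is to prove the claim by strong induction on the order $|T|$, using Claims~\ref{claim:3(R-R)_DEG1} and~\ref{claim:3(R-R)_UNION} as the two reduction tools. These claims supply exactly the two moves needed to dismantle a rooted tree whose root has degree one: Claim~\ref{claim:3(R-R)_DEG1} lets one peel off a pendant path up to the first branching vertex, while Claim~\ref{claim:3(R-R)_UNION} lets one split a tree rooted at a vertex of degree at least two into its branches. The key structural observation is that every tree with a degree-one root is either a path or is assembled from strictly smaller degree-one-rooted trees by precisely these two operations.

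Concretely, I would first dispose of the case where $T$ is a path rooted at an endpoint (forced since $\deg(v)=1$); here no vertex of degree exceeding two exists, so neither claim applies, and one verifies~(\ref{eq:3(R-R)}) by a direct computation, which in fact yields equality. Otherwise, let $v_{1}$ be the vertex closest to $v$ with $\deg(v_{1})>2$, and let $T_{1}$ be the subtree containing $v_{1}$ obtained by deleting the path $vv_{1}$. Then $T_{1}$ is rooted at $v_{1}$ with $\deg_{T_{1}}(v_{1})=\deg_{T}(v_{1})-1\ge2$. Since the induction hypothesis concerns only degree-one roots, it cannot be applied to $T_{1}$ directly; instead I would decompose $T_{1}=S_{1}\overset{v_{1}}{\cup}\cdots\overset{v_{1}}{\cup}S_{k}$ into its $k\ge2$ branches at $v_{1}$, where each $S_{j}$ consists of $v_{1}$ together with one branch and is therefore a non-singleton tree rooted at $v_{1}$ of degree one with $|S_{j}|<|T|$. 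By the induction hypothesis each $S_{j}$ satisfies~(\ref{eq:3(R-R)}), so a repeated application of Claim~\ref{claim:3(R-R)_UNION} shows that $T_{1}$ satisfies~(\ref{eq:3(R-R)}), and then Claim~\ref{claim:3(R-R)_DEG1} transfers the inequality from $T_{1}$ back to $T$, completing the induction.

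The main obstacle is not any single hard estimate---both claims already carry the analytic load---but getting the inductive architecture right: the reduction from $T$ lands on $T_{1}$, whose root no longer has degree one, so the induction cannot close on $T_{1}$ itself and must instead pass through its degree-one branches $S_{j}$ before rebuilding $T_{1}$ via the union claim. Verifying that each $S_{j}$ is genuinely smaller (which needs $k\ge2$, guaranteed by $\deg_{T}(v_{1})>2$) and non-singleton is what makes the strong induction well-founded and the application of Claim~\ref{claim:3(R-R)_UNION} legitimate. The only hand computation that remains is the path base case, which is routine and produces equality.
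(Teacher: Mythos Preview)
Your proposal is correct and follows essentially the same architecture as the paper's proof: induct on the order, locate the first branching vertex $v_{1}$ along the pendant path from $v$, split the subtree $T_{1}$ rooted there into its degree-one branches, apply the induction hypothesis to each branch, rebuild $T_{1}$ via Claim~\ref{claim:3(R-R)_UNION}, and then transfer to $T$ via Claim~\ref{claim:3(R-R)_DEG1}. The only cosmetic difference is that the paper states the singleton as a base case and verifies the path case separately at the end, whereas you (slightly more cleanly) treat all paths as the base case directly.
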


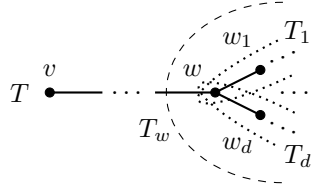
\begin{figure}[h]     
\begin{center}     
\begin{tikzpicture}     
	{          
	\draw[thick] (-0.7,0)--(0,0); 
	\node at (0.32,0) {$\ldots$};  
	\draw[fill] (-0.7,0) circle (0.06);     
	\draw[fill] (1.5,0) circle (0.06);
	\draw[thick] (2.1,-0.3)--(1.5,0)--(2.1,0.3);
	\draw[fill] (2.1,0.3) circle (0.06);
	\draw[fill] (2.1,-0.3) circle (0.06);
	\draw[thick] (0.7,0)--(1.5,0); 

	\draw[dotted,thick,rotate=26,anchor=center] (2.4,-0.4) arc (90:270:1.3cm and 0.25cm);
	\draw[dotted,thick,rotate=-27,anchor=center] (2.4,0.92) arc (90:270:1.3cm and 0.25cm);
	
	\draw[dashed] (2.4,1.2) arc (90:270:1.55cm and 1.2cm);
	
	\node at (-1.1,0) {$T$};
	\node at (0.7,-0.5) {$T_w$};
    \node at (-0.7,0.3) {$v$};
	\node at (1.2,0.3) {$w$};
	\node at (2.55,0) {$\ldots$}; 

	\node at (1.8,0.7) {$w_1$};
	\node at (1.8,-0.7) {$w_d$};

	\node[thick,rotate=23,anchor=center] at (2.4,0.47) {$\ldots$}; 
	\node at (2.6,0.8) {$T_1$};
	\node[thick,rotate=-23,anchor=center] at (2.4,-0.47) {$\ldots$}; 
	\node at (2.6,-0.85) {$T_d$};	
    } 	
\end{tikzpicture}\\ 
\end{center} 
\caption{$T_w$: dashed curve, $T_1$, ..., $T_d$: dotted curves.}
\label{fig:3(R-R)_nopath} 
\end{figure} 
\begin{proof}
We prove the statement by induction on the order. For a singleton
tree, the inequality reduces to $1\ge0.$ 

Now assume that $T$ is not a path. Let $w$ be the closest vertex
of degree greater than 2, and $T_{w}$ be the subtree with root $w.$
Denote the \foreignlanguage{american}{neighbours} of $w$ in $T_{w}$
by $w_{1},\ldots,w_{d}.$ Let $T_{i},i=1,\ldots,d,$ be the subtree
that contains $w$ and $w_{i}$ and all that is attached, as indicated
by dotted curves in Figure~\ref{fig:3(R-R)_nopath}. Note that $\deg_{T_{i}}(w)=1.$
Then, by the induction hypothesis, $T_{1},\ldots,T_{d}$ all \foreignlanguage{american}{satisfy}
(\ref{eq:3(R-R)}). So Claim \ref{claim:3(R-R)_UNION} implies that
$T_{w}$ satisfies (\ref{claim:3(R-R)_UNION}) as well. Now Claim
\ref{claim:3(R-R)_DEG1} yields the statement.

Now we deal with the case of a path. If $T$ is a path with $n$ vertices,
we have
\[
N_{v}=n,N_{T}=\frac{n(n+1)}{2},R_{v}=\frac{n(n+1)}{2},R_{T}=\frac{n(n+1)(n+2)}{6}.
\]
Hence
\begin{align*}
 & N_{v}N_{T}-N_{v}^{2}+N_{T}-N_{v}-3(R_{T}-R_{v})\\
= & \:\frac{n^{2}(n+1)}{2}-n^{2}+\frac{n(n+1)}{2}-n-3\left(\frac{n(n+1)(n+2)}{6}-\frac{n(n+1)}{2}\right)\\
= & \:0.
\end{align*}
\end{proof}
\begin{lem}
\label{lem:3(R-R)}Let $T$ be a rooted tree with root $v.$ Then
$T$ satisfies (\ref{eq:3(R-R)}).
\end{lem}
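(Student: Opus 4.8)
The plan is to mirror the assembly used for Lemma~\ref{lem:3(NR-RN)}, building an arbitrary rooted tree out of pieces on which (\ref{eq:3(R-R)}) is already known. The two preceding claims have done all the analytic work: Claim~\ref{claim:3(R-R)_DEG1GENERAL} establishes (\ref{eq:3(R-R)}) for \emph{every} rooted tree whose root has degree one, and Claim~\ref{claim:3(R-R)_UNION} shows the inequality is preserved when two non-singleton rooted trees are glued at their common root. So what remains is simply to reduce a root of arbitrary degree to these two cases.

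First I would dispose of the degenerate case: if $\deg(v)=1$, there is nothing to prove, since Claim~\ref{claim:3(R-R)_DEG1GENERAL} yields the conclusion verbatim. So assume $\deg(v)=d\ge 2$ and let $v_{1},\dots,v_{d}$ be the neighbours of $v$. For each $i$, let $T_{i}+v$ denote the rooted subtree consisting of the edge $vv_{i}$ together with the branch hanging at $v_{i}$, rooted at $v$. By construction each $T_{i}+v$ has its root $v$ of degree exactly one, so Claim~\ref{claim:3(R-R)_DEG1GENERAL} applies and every $T_{i}+v$ satisfies (\ref{eq:3(R-R)}).

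Then I would reassemble. Since $T=(T_{1}+v)\overset{v}{\cup}\cdots\overset{v}{\cup}(T_{d}+v)$, I apply Claim~\ref{claim:3(R-R)_UNION} repeatedly---first to $(T_{1}+v)$ and $(T_{2}+v)$, then folding in $(T_{3}+v)$, and so on---to conclude that $T$ itself satisfies (\ref{eq:3(R-R)}). A short induction on $d$ makes this chain of unions precise.

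The only point demanding care, and hence the main (if modest) obstacle, is verifying the hypotheses of the union claim at each merge: both arguments must be non-singleton rooted trees sharing the root $v$. Each $T_{i}+v$ contains at least the two vertices $v$ and $v_{i}$, so it is non-singleton, and the partial unions $(T_{1}+v)\overset{v}{\cup}\cdots\overset{v}{\cup}(T_{j}+v)$ remain non-singleton for every intermediate $j$; thus Claim~\ref{claim:3(R-R)_UNION} is legitimately invoked at each step. All of the genuinely difficult estimation has already been absorbed into Claim~\ref{claim:3(R-R)_DEG1} (with its separate treatment of the $P_{2,2}$ and $P_{2,3}$ base cases) and Claim~\ref{claim:3(R-R)_UNION}, leaving this lemma as a clean combinatorial assembly.
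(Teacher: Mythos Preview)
Your proposal is correct and follows essentially the same route as the paper's own proof, which simply says that the $\deg(v)=1$ case is Claim~\ref{claim:3(R-R)_DEG1GENERAL} and that for $\deg(v)\ge 2$ one combines Claims~\ref{claim:3(R-R)_UNION} and~\ref{claim:3(R-R)_DEG1GENERAL}. You have merely spelled out the decomposition $T=(T_{1}+v)\overset{v}{\cup}\cdots\overset{v}{\cup}(T_{d}+v)$ and the non-singleton check in more detail than the paper does.
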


\begin{proof}
If $\deg(v)=1,$ then Claim \ref{claim:3(R-R)_DEG1GENERAL} yields
the statement. If $\deg(v)\ge2,$ then Claim \ref{claim:3(R-R)_UNION}
and \ref{claim:3(R-R)_DEG1GENERAL} together yield the statement.
\end{proof}
\begin{lem}
\label{lem:N^2+N>3R}Let $T_{v}$ be a rooted tree with root $v$
and $\deg(v)\ge2.$ Let $T_{v}\neq P_{2,2},P_{2,3}.$ Then
\[
N_{v}^{2}+N_{v}\ge3R_{v}.
\]
\end{lem}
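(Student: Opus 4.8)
The plan is to prove the inequality $N_{v}^{2}+N_{v}\ge3R_{v}$ by the same two-step inductive strategy already used successfully for (\ref{eq:3(NR-RN)}) and (\ref{eq:3(R-R)}): first reduce to understanding how the quantity behaves when the root has degree one (i.e., when we prepend a path to the root), then handle how it behaves under the union $T_{v}=T_{1}\overset{v}{\cup}T_{2}$ of two rooted trees at a common root, and finally run an induction on the order of the tree. The crucial wrinkle, and what distinguishes this lemma from the previous two, is the exclusion of $P_{2,2}$ and $P_{2,3}$: these are precisely the small rooted trees of degree $\ge2$ for which $N_{v}^{2}+N_{v}=3R_{v}$ fails, so the induction must be set up so that its base cases are large enough to avoid them, and the union step must verify that combining two admissible pieces cannot recreate a $P_{2,2}$ or $P_{2,3}$.

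First I would record the transfer formulas. For the degree-one (path-prepending) step, if $T$ is obtained from a rooted tree $T_{1}$ (rooted at $v_{1}$) by attaching a path of length $l$ ending at the new root $v$, then, exactly as in Claim~\ref{claim:3(R-R)_DEG1}, one has $N_{v}=N_{1}+l$ and $R_{v}=R_{1}+lN_{1}+\tfrac{l(l+1)}{2}$. Substituting these into $N_{v}^{2}+N_{v}-3R_{v}$ and expanding, the mixed and quadratic-in-$l$ terms should organize into $l(N_{1}^{2}+N_{1}-3R_{1})$ plus a nonnegative remainder coming from $l^{2}$ and lower-order contributions; so the degree-one case follows from the inductive hypothesis $N_{1}^{2}+N_{1}\ge3R_{1}$ on $T_{1}$, provided $T_{1}$ is itself admissible. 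For the union step, using the standard identities $N_{v}=N_{1}N_{2}$ and $R_{v}=N_{1}R_{2}+N_{2}R_{1}-N_{1}N_{2}$, I would expand $N_{v}^{2}+N_{v}-3R_{v}=N_{1}^{2}N_{2}^{2}+N_{1}N_{2}-3N_{1}R_{2}-3N_{2}R_{1}+3N_{1}N_{2}$ and try to bound it below by a combination of the two hypotheses $N_{i}^{2}+N_{i}\ge3R_{i}$; replacing $3R_{2}\le N_{2}^{2}+N_{2}$ and $3R_{1}\le N_{1}^{2}+N_{1}$ turns the expression into a polynomial in $N_{1},N_{2}$ which should factor into manifestly nonnegative pieces (for instance something like $N_{1}N_{2}(N_{1}-1)(N_{2}-1)$ together with lower terms), using $N_{1},N_{2}\ge2$.

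The main obstacle I anticipate is bookkeeping around the excluded trees rather than any single hard inequality. In the union step the hypotheses $N_{i}^{2}+N_{i}\ge3R_{i}$ are only available when each $T_{i}$ is admissible, but the natural decomposition of a tree of degree $\ge2$ at the root splits it into two pieces that could individually be $P_{2}$-like or even $P_{2,2}$/$P_{2,3}$; I would therefore need either to allow degree-one base pieces (for which the inequality is known from the earlier inequality~(\ref{eq:JamisonIneq}), since $N_{v}^{2}+N_{v}\ge2R_{v}$ and one separately checks the strengthening) or to carry the $P_{2,2}$, $P_{2,3}$ data through explicitly as additional base cases, verifying by direct substitution of the values $N_{v}=4,R_{v}=8$ and $N_{v}=6,R_{v}=15$ that the deficit $3R_{v}-(N_{v}^{2}+N_{v})$ is small and is absorbed once a nontrivial path or second branch is attached. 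Concretely, I would organize the induction so that the two genuinely exceptional rooted trees appear only as isolated low-order cases, check the inequality directly for all admissible rooted trees up to the relevant small order, and then argue that every larger admissible tree of degree $\ge2$ decomposes as a union $T_{1}\overset{v}{\cup}T_{2}$ in which at least one part is strictly smaller and the union of two smaller admissible parts is never $P_{2,2}$ or $P_{2,3}$, closing the induction.
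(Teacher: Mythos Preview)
Your two-step inductive plan has a real gap. The lemma concerns only rooted trees with $\deg(v)\ge2$, and the inequality you want to induct on is \emph{false} for degree-one roots: for $P_{n}$ rooted at a leaf one has $N_{v}=n$, $R_{v}=\tfrac{n(n+1)}{2}$, and $N_{v}^{2}+N_{v}-3R_{v}=-\tfrac{n(n+1)}{2}<0$. So in your union step $T_{v}=T_{1}\overset{v}{\cup}T_{2}$ you cannot invoke the inductive hypothesis $3R_{i}\le N_{i}^{2}+N_{i}$ on the pieces, since each $T_{i}$ has root of degree $1$ when $\deg_{T_{v}}(v)=2$. Your path-prepending step is likewise off target: it produces a tree whose root has degree $1$ (outside the scope of the lemma), and the computation you sketch goes the wrong way---one gets $N_{v}^{2}+N_{v}-3R_{v}=(N_{1}^{2}+N_{1}-3R_{1})-l\bigl(N_{1}+\tfrac{l+1}{2}\bigr)$, a \emph{decrease}, not an increase.

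The paper's proof avoids induction entirely. It performs a single decomposition $T_{v}=T_{1}\overset{v}{\cup}T_{2}$ and applies only Jamison's universally valid bound (\ref{eq:JamisonIneq}), $R_{i}\le\tfrac{N_{i}^{2}+N_{i}}{2}$, to each piece. Substituting into $N_{v}^{2}+N_{v}-3R_{v}=N_{1}^{2}N_{2}^{2}+4N_{1}N_{2}-3N_{1}R_{2}-3N_{2}R_{1}$ gives a lower bound of $N_{1}N_{2}\bigl((N_{1}-\tfrac{3}{2})(N_{2}-\tfrac{3}{2})-\tfrac{5}{4}\bigr)$, which is nonnegative exactly when $(N_{1},N_{2})\notin\{(2,2),(2,3)\}$---and those two cases are precisely $P_{2,2}$ and $P_{2,3}$. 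You do mention (\ref{eq:JamisonIneq}) as a fallback, and that is in fact the entire argument; no induction or path step is needed.
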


\begin{proof}
Since $\deg(v)\ge2$ and $T_{v}\neq P_{2,2},$ $T_{v}$ can be expressed
as a union $T_{v}=T_{1}\overset{v}{\cup}T_{2},$ where both $T_{1}$
and $T_{2}$ are non-singletons. Without loss of generality, assume
$N_{1}\le N_{2}.$ We know that $N_{1}\ge2,$ $N_{2}\ge3,$ and when
$N_{1}=2,N_{2}\ge4.$

Thus, one has
\begin{align*}
N_{v}^{2}+N_{v}-3R_{v}= & \:N_{1}^{2}N_{2}^{2}+N_{1}N_{2}-3(N_{1}R_{2}+N_{2}R_{1}-N_{1}N_{2})\\
\ge & \:N_{1}^{2}N_{2}^{2}+4N_{1}N_{2}-3N_{1}\frac{N_{2}^{2}+N_{2}}{2}-3N_{2}\frac{N_{1}^{2}+N_{1}}{2}\\
= & \:N_{1}N_{2}\left((N_{1}-\frac{3}{2})(N_{2}-\frac{3}{2})-\frac{5}{4}\right),
\end{align*}
where inequality (\ref{eq:JamisonIneq}) is used in the second step.
The result is non-negative if $N_{2}\ge N_{1}>2$ or $N_{2}\ge4,N_{1}=2.$
Therefore, the inequality is proved.
\end{proof}
\begin{lem}
\label{lem:NN+N>3R}Let $T_{v}$ be a rooted tree with root $v,$
and $\deg(v)\ge2.$ Let $T_{v}\neq P_{2,2},P_{2,3}.$ Then
\[
N_{v}N_{T}+N_{T}>3R_{T}.
\]
\end{lem}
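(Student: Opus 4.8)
The plan is to obtain the inequality by adding the two sharpened auxiliary inequalities (\ref{eq:N^2+N-3R}) and (\ref{eq:3(R-R)}), in direct analogy with the observation that (\ref{eq:StephanIneq}) and (\ref{eq:3(R-R)}) together yield (\ref{eq:NN+2N>3R}). The point is the elementary identity
\begin{align*}
N_{v}N_{T}+N_{T}-3R_{T}=\bigl(N_{v}^{2}+N_{v}-3R_{v}\bigr)+\bigl(N_{v}N_{T}-N_{v}^{2}+N_{T}-N_{v}-3(R_{T}-R_{v})\bigr),
\end{align*}
which is checked by cancelling the $N_{v}^{2}$, $N_{v}$ and $3R_{v}$ terms. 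The first summand is the left-hand side of (\ref{eq:N^2+N-3R}), hence nonnegative by Lemma \ref{lem:N^2+N>3R}; this is exactly where the hypothesis $T_{v}\neq P_{2,2},P_{2,3}$ enters, and where replacing (\ref{eq:StephanIneq}) by the stronger (\ref{eq:N^2+N-3R}) trades the bound $N_{v}N_{T}+2N_{T}\ge3R_{T}$ for $N_{v}N_{T}+N_{T}\ge3R_{T}$. The second summand is the left-hand side of (\ref{eq:3(R-R)}), nonnegative by Lemma \ref{lem:3(R-R)}. Adding the two already delivers the non-strict bound $N_{v}N_{T}+N_{T}\ge3R_{T}$.

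To promote this to a strict inequality, I would argue that the second summand is strictly positive as soon as $\deg(v)\ge2$. Since $\deg(v)\ge2$, the neighbours of $v$ can be partitioned into two nonempty groups, so $T_{v}=T_{1}\overset{v}{\cup}T_{2}$ with $T_{1}$ and $T_{2}$ both non-singleton, just as in Lemma \ref{lem:N^2+N>3R}. Revisiting the final computation in the proof of Claim \ref{claim:3(R-R)_UNION}, the left-hand side of (\ref{eq:3(R-R)}) for $T_{v}$ is bounded below by
\begin{align*}
(N_{2}-2)(N_{1}N_{T_{1}}-N_{1}^{2})+(N_{1}-2)(N_{2}N_{T_{2}}-N_{2}^{2})+N_{1}(N_{T_{1}}-N_{1})+N_{2}(N_{T_{2}}-N_{2}).
\end{align*}
The first two terms are nonnegative because $N_{1},N_{2}\ge2$ and $N_{T_{i}}\ge N_{i}$, while the last two are strictly positive: each non-singleton $T_{i}$ contains a vertex other than $v$, whose single-vertex subtree avoids $v$, so $N_{T_{i}}\ge N_{i}+1>N_{i}$. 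Hence the second summand exceeds $0$, and combined with the nonnegative first summand this gives $N_{v}N_{T}+N_{T}-3R_{T}>0$.

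The only real obstacle is the strictness step, and it is mild, resting solely on the fact that a non-singleton rooted tree has strictly more subtrees than subtrees through its root; no idea beyond those already used for (\ref{eq:3(R-R)}) is needed, and the main risk is bookkeeping when re-deriving the lower bound from Claim \ref{claim:3(R-R)_UNION}. Finally, the exclusion of $P_{2,2}$ is genuinely necessary: there $N_{v}^{2}+N_{v}-3R_{v}=-4$ cancels the strictly positive second summand, and one computes directly $N_{v}N_{T}+N_{T}-3R_{T}=0$.
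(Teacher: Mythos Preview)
Your proof is correct but takes a genuinely different route from the paper's. The paper argues in one line: from Lemma \ref{lem:N^2+N>3R} it gets $N_{v}+1\ge 3\mu_{T}(v)$, then invokes Jamison's result that $\mu_{T}(v)>\mu_{T}$ strictly for any non-singleton tree, and multiplies through by $N_{T}$. Your approach instead uses the additive decomposition into the left-hand sides of (\ref{eq:N^2+N-3R}) and (\ref{eq:3(R-R)}), and obtains strictness by showing that (\ref{eq:3(R-R)}) is strict whenever $\deg(v)\ge2$, re-using the lower bound from Claim \ref{claim:3(R-R)_UNION} together with $N_{T_{i}}>N_{i}$ for non-singleton $T_{i}$. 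The paper's version is shorter and more conceptual, but leans on an external citation for the strict inequality $\mu_{T}(v)>\mu_{T}$; your version is entirely self-contained within the paper's own lemmas, at the cost of re-running a piece of the Claim \ref{claim:3(R-R)_UNION} computation. Both are perfectly valid; yours has the minor advantage of exhibiting explicitly that the strictness in (\ref{eq:NN+N-3R}) comes from the strictness of (\ref{eq:3(R-R)}) at a branching vertex.
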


\begin{proof}
It follows immediately from Lemma \ref{lem:N^2+N>3R} that
\[
N_{v}+1\ge3\mu_{v}>3\mu_{T}.
\]
The right inequality is strict because local mean equals global mean
if and only if the tree is a trivial singleton (\cite{MR735190}).
Now, multiplying the two sides with $N_{T}$ yields the desired inequality.
\end{proof}

\section{Acknowledgement}

I would like to thank Stephan Wagner for the discussions, and Stijn
Cambie for his suggestion of strengthening inequality (\ref{eq:3(R-R)}).

\bibliographystyle{plain}
\addcontentsline{toc}{section}{\refname}\bibliography{contractionconjecture}

\begin{thebibliography}{1}

\bibitem{MR4282633}
Stijn Cambie, Stephan Wagner, and Hua Wang.
\newblock On the maximum mean subtree order of trees.
\newblock {\em European J. Combin.}, 97:Paper No. 103388, 19, 2021.

\bibitem{MR3213626}
John Haslegrave.
\newblock Extremal results on average subtree density of series-reduced trees.
\newblock {\em J. Combin. Theory Ser. B}, 107:26--41, 2014.

\bibitem{MR735190}
Robert~E. Jamison.
\newblock On the average number of nodes in a subtree of a tree.
\newblock {\em J. Combin. Theory Ser. B}, 35(3):207--223, 1983.

\bibitem{MR762896}
Robert~E. Jamison.
\newblock Monotonicity of the mean order of subtrees.
\newblock {\em J. Combin. Theory Ser. B}, 37(1):70--78, 1984.

\bibitem{MR4563206}
Zuwen Luo, Kexiang Xu, Stephan Wagner, and Hua Wang.
\newblock On the mean subtree order of trees under edge contraction.
\newblock {\em J. Graph Theory}, 102(3):535--551, 2023.

\bibitem{MR3982896}
Lucas Mol and Ortrud~R. Oellermann.
\newblock Maximizing the mean subtree order.
\newblock {\em J. Graph Theory}, 91(4):326--352, 2019.

\bibitem{MR2595700}
Andrew Vince and Hua Wang.
\newblock The average order of a subtree of a tree.
\newblock {\em J. Combin. Theory Ser. B}, 100(2):161--170, 2010.

\bibitem{MR3433637}
Stephan Wagner and Hua Wang.
\newblock On the local and global means of subtree orders.
\newblock {\em J. Graph Theory}, 81(2):154--166, 2016.

\end{thebibliography}

\end{document}